\documentclass[12pt]{amsart}

\usepackage[T1]{fontenc}
\usepackage{lmodern}
\usepackage{amsmath, amssymb, amsthm}
\usepackage{geometry}
\usepackage{hyperref}
\usepackage{enumerate}
\usepackage{todonotes}
\usepackage{adjustbox}
\usepackage[normalem]{ulem}

\usepackage{booktabs}
\usepackage{rotating}  
\newtheorem{theorem}{Theorem}[section]
\newtheorem{prop}[theorem]{Proposition}
\newtheorem{lemma}[theorem]{Lemma}
\newtheorem{definition}[theorem]{Definition}
\newtheorem*{remark}{Remark}

\newcommand{\abs}[1]{\left\vert#1\right\vert}

\begin{document}

\title[Three-Term $b$-Concatenations in Pell-Type Sequences]{On the Diophantine Equations Arising from Three-Term $b$-Concatenations in Pell-Type Sequences}

\author[K. N. Ad\'edji]{Kou\`essi Norbert Ad\'edji}
\address{Institut de Math\'ematiques et de Sciences Physiques,
Universit\'e d'Abomey-Calavi, Abomey-Calavi, Benin}
\email{adedjnorb1988@gmail.com}

\author[M. Bliznac Trebje\v{s}anin]{Marija {Bliznac Trebje\v{s}anin}}
\address{University of Split, Faculty of Science, Ru\dj{}era Bo\v{s}kovi\'{c}a 33,
21000 Split, Croatia}
\email{marbli@pmfst.hr}

\author[J. Ple\v{s}tina]{Jelena {Ple\v{s}tina}*}
\thanks{*Corresponding author.}
\address{University of Split, Faculty of Science, Ru\dj{}era Bo\v{s}kovi\'{c}a 33,
21000 Split, Croatia}
\email{jplestina@pmfst.hr}

\begin{abstract}
We study the Diophantine problem of representing a Pell or Pell--Lucas number as the base-$b$ concatenation of three other Pell or Pell--Lucas numbers. Under the condition that the index of the middle block does not exceed that of the leading block, we obtain an explicit upper bound for the sequence indices in terms of $b$. For $2\leq b\leq15$, we determine exactly $51$ solutions up to the identification $Q_0=Q_1=2$, while there are $74$ solutions when distinct index choices are counted separately.
\end{abstract}

\maketitle 

\noindent{\it 2020 {Mathematics Subject Classification:}} 11B39, 11J68, 11J86

\noindent{\it Keywords}: {Pell numbers, Pell--Lucas numbers, $b$-concatenations, linear forms in logarithms, reduction method.}

\section{Introduction and Sequence Properties}

Let the Pell sequence $P=(P_{n})_{n\ge0}$ and the Pell--Lucas sequence $Q=(Q_{n})_{n\ge0}$ be defined by the initial values $P_0 = 0, P_1 = 1$, and $Q_0 = 2, Q_1 = 2$, along with the common binary linear recurrence relation for $n \ge 2$:
\begin{equation} \label{eq:common_recurrence}
U_n = 2U_{n-1} + U_{n-2}, \quad \text{where } U \in \{P, Q\}.
\end{equation}
The characteristic equation is $x^2 - 2x - 1 = 0$, with roots $\alpha = 1 + \sqrt{2}$ and $\beta = 1 - \sqrt{2}$. Note that $\alpha \beta = -1$, and hence $\beta = -1/\alpha$. The absolute value of the conjugate root is $|\beta| = \sqrt{2} - 1 < 1$.

The Binet formulas are
\begin{equation*}
P_n = \frac{\alpha^n - \beta^n}{2\sqrt{2}} = c(\alpha^n - \beta^n), \quad Q_n = \alpha^n + \beta^n,
\end{equation*}
where $c = \frac{1}{2\sqrt{2}}$.

Let $\lambda_P=c$ and $\lambda_Q=1$ denote the coefficients of $\alpha^j$, and let $\varepsilon_P(j)=-c\beta^j$ and $\varepsilon_Q(j)=\beta^j$ denote the second summands in the Binet formulas. Then we can write
\begin{equation}\label{eq:binet_general}
    U_j=\lambda_U\alpha^j+\varepsilon_U(j).
\end{equation}
Since $|\beta|=\alpha^{-1}$ and $0<c<1$, we have $|\varepsilon_U(j)|\leq \alpha^{-j}, \;  j\geq 0.$

The following bounds hold for all $n \ge 1$:
\begin{equation} \label{eq:bounds_P_Q}
\alpha^{n-2} \le P_n \le \alpha^{n-1}, \quad \alpha^{n-1} \le Q_n < \alpha^{n+1}.
\end{equation}
Moreover, for every $n \geq 1$, we have 
\begin{equation}
P_n < Q_n < 4P_n.
\end{equation}
These inequalities follow from \eqref{eq:common_recurrence} by induction.

Let $b \ge 2$ be an integer and let $l_b(n)=\lfloor \log_b n \rfloor + 1$ denote the number of digits of a positive integer $n$ in base $b$. We also define $l_b(0)=1$. 

\begin{definition}
Let $b\geq 2$ be an integer. A positive integer $N$ is called a base-$b$ concatenation (or $b$-concatenation) of nonnegative integers $a_1$, $a_2$, and $a_3$ if it can be expressed as 
$$N=a_1 b^{d+l}+a_2 b^l+a_3,$$
where $d$ and $l$ are the numbers of digits of $a_2$ and $a_3$, respectively, in base $b$. 
\end{definition}

We investigate when a Pell or Pell--Lucas number is a $b$-con\-ca\-te\-na\-ti\-on of three Pell or Pell--Lucas numbers. This leads to the study of 16 different equations, which can be expressed in the general form:
\begin{equation}\label{eq:main}
   X_k=b^{d+l}Y_m+b^lZ_n+T_r,
\end{equation}
where $X, Y, Z, T \in \{P, Q\}$, $d = l_b(Z_n)$ and $l = l_b(T_r)$, and $k, m, n, r$ are nonnegative integers denoting the indices of the sequences. Furthermore, we require $m\geq 1$. Specifically, if $Y_m=P_m$, we exclude the case $m=0$ because $P_0=0$, and concatenating a leading $0$  reduces the expression to a concatenation of fewer than three numbers. Additionally, if $Y_m=Q_m$, we can assume $m\geq 1$ without loss of generality since $Q_0=Q_1=2$. 

Concatenations of an arbitrary number of terms from the same binary recurrent sequence were studied by Banks and Luca \cite{BanksLuca2005}. They proved an ineffective finiteness result for the general case and completely resolved the case of the concatenation of two elements of the Fibonacci sequence. Meanwhile, mixed Pell and Pell--Lucas concatenations of length two were investigated in \cite{AdedjiFayeTogbe2024} and \cite{abt}, while three-term concatenations have been considered for the Padovan and Perrin sequences in~\cite{Erduvan2026}. In the present work, we continue the study of mixed Pell and Pell--Lucas concatenations by considering concatenations of length three.

Our analysis is restricted to the condition $n \le m$, even though solutions with $n>m$ do exist, for example, $P_5=2^4\cdot P_1+2^1\cdot Q_2+P_1$ or $Q_{13} = 9^4 \cdot Q_3 + 9^2 \cdot Q_4 + Q_4$. This assumption is crucial for applying lower bounds for linear forms in logarithms. Without this condition, the logarithmic height of the algebraic number in the second application of Matveev's theorem would grow proportionally with $n$, leading to bounds that fail to restrict the index $k$ to a finite range. Under this assumption, we obtain an upper bound on $k$, as stated in our main theorem.

\begin{theorem}\label{tm:main}
Let $b \ge 2$ be an integer and let $X,Y,Z,T\in\{P,Q\}$. Then, the Diophantine equation
\begin{equation*}
X_k = b^{d+l}Y_m + b^lZ_n + T_r,
\end{equation*}
where $d = l_b(Z_n)$, and $l = l_b(T_r)$, has only finitely many solutions in nonnegative integers $k, m, n, r$ under the conditions $m \ge 1$ and $n \le m$. 
Namely, we have
$$k < 8 \cdot 10^{30} \cdot \log^3 b.$$
\end{theorem}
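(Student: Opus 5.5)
The plan is to run the standard Baker-type argument, applying Matveev's theorem twice. First, we relate the sizes. From \eqref{eq:bounds_P_Q} and $b^{l-1}\le T_r<b^l$, $b^{d-1}\le Z_n<b^d$, we get
\[
b^{d+l-1}Y_m\le X_k<b^{d+l}(Y_m+1),
\]
so $k\log\alpha$ is comparable to $(d+l)\log b+m\log\alpha$. Since $n\le m$, we have $d\le 1+(m+1)\log\alpha/\log b$, which gives
\[
(d+l)\log b\ll k\log\alpha \quad\text{and}\quad m,n,r< k+O(1).
\]
We may assume $k$ is large, say $k>500$; small $k$ is trivially within the bound. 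We also note that $\log b<k$, since otherwise the claimed bound holds outright.

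\textbf{First linear form.} Using \eqref{eq:binet_general}, rewrite the equation as
\[
\lambda_X\alpha^k-\lambda_Y b^{d+l}\alpha^m = b^lZ_n+T_r-\varepsilon_X(k)+b^{d+l}\varepsilon_Y(m).
\]
Divide by $\lambda_X\alpha^k$. The right side is $O(b^{d+l}\alpha^{n}/\alpha^k)$, and since $b^{d+l}\alpha^m\asymp\alpha^k$ this is $O(\alpha^{n-m})$. Hence
\[
\Lambda_1=1-(\lambda_Y/\lambda_X)\,b^{d+l}\alpha^{m-k}, \qquad |\Lambda_1|\ll \alpha^{-(m-n)}.
\]
We check $\Lambda_1\ne0$: otherwise $b^{d+l}\alpha^{m-k}\in\mathbb{Q}(\sqrt2)$ would be rational up to the factor $\lambda_X/\lambda_Y\in\{1,c^{\pm1}\}$, and conjugating forces $m=k$, which is impossible by size. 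Now apply Matveev with $t=3$, field $\mathbb{Q}(\sqrt2)$ of degree $2$, and heights
\[
h(\alpha)=\tfrac12\log\alpha,\qquad h(b)=\log b,\qquad h(\lambda_Y/\lambda_X)\le\log(2\sqrt2),
\]
with exponent bound $B\ll k$. This gives
\[
(m-n)\log\alpha < C_1\log b\,(1+\log k),
\]
with explicit $C_1\approx 10^{12}$ or so.

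\textbf{Second linear form.} Group the first two blocks, writing $b^{d+l}Y_m+b^lZ_n=b^l(b^dY_m+Z_n)$. Substitute the Binet forms for $Y_m$ and $Z_n$:
\[
\lambda_X\alpha^k-b^l\alpha^n\bigl(\lambda_Yb^d\alpha^{m-n}+\lambda_Z\bigr)=T_r+\text{small}.
\]
Dividing by $\lambda_X\alpha^k$ gives
\[
\Lambda_2=1-\frac{\lambda_Yb^d\alpha^{m-n}+\lambda_Z}{\lambda_X}\,b^l\alpha^{n-k},\qquad |\Lambda_2|\ll b^l/\alpha^{k}\cdot\alpha^{0}\ll \alpha^{-(k-r)}\cdot O(1).
\]
Here we use $T_r<b^l$ and $b^{l}\le bT_r$, so $|\Lambda_2|\ll b\,\alpha^{r-k}$. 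The key point is the height of
\[
\gamma=(\lambda_Yb^d\alpha^{m-n}+\lambda_Z)/\lambda_X.
\]
We have $h(\gamma)\ll d\log b+(m-n)\log\alpha+1$. This is exactly where $n\le m$ enters: $d\log b\ll n\log\alpha+\log b$ would be of size $k$ if unrestricted, but we instead bound $b^d\le bZ_n$. To keep $h(\gamma)$ small we should therefore absorb $b^d$ differently. A better choice is $\gamma=(\lambda_Y b^{d}\alpha^{m-n}+\lambda_Z)$ with $b^d$ paired with $\alpha^{n}$ through $b^d\asymp\alpha^{n}$. Concretely, we take the logarithms $b$, $\alpha$, $\gamma$ with $\gamma=\lambda_Y\alpha^{m-n}+\lambda_Z b^{-d}$, exponents $d+l$ and $m-k$, so that $h(\gamma)\ll (m-n)\log\alpha+d\log b$. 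Since $d\log b\le \log b+ n\log\alpha$, this would still be large. I expect this to be the main obstacle.

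To handle it, I will work instead with the factorisation that keeps $b^d$ as an exponent of $b$: take
\[
\Lambda_2=1-\gamma\, b^{d+l}\alpha^{m-k},\qquad \gamma=\frac{\lambda_Y+\lambda_Z b^{-d}\alpha^{n-m}}{\lambda_X}.
\]
Then $h(\gamma)\le d\log b\cdot(\text{tiny})$ — this still fails. So the honest fix is to use $\gamma=(\lambda_Y\alpha^{m-n}+\lambda_Zb^{-d})$ and bound $h(b^{-d})$ via $d\le l_b(\alpha^{m+1})$, accepting $h(\gamma)\ll(m-n)\log\alpha+\log b$ after noting $b^{-d}$ contributes $d\log b$. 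In this form, $n\le m$ guarantees $d\log b\le \log b+(m-n)\log\alpha+ n\log\alpha$, and we check that the $n\log\alpha$ part cancels against the exponent rewriting $\alpha^{n}$, i.e.\ we use $\alpha^n\approx 2\sqrt2\, Z_n$, which is rational up to an $O(\alpha^{-2n})$ error absorbed into $\Lambda_2$. With $h(\gamma)\ll C_1\log^2 b\,\log k$, Matveev yields
\[
(k-r)\log\alpha\ll C_2\log^3b\,\log^2k.
\]

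\textbf{Third linear form and conclusion.} Finally, write the whole right side as $b^{l}\cdot(\text{previous})+T_r$ with $T_r=\lambda_T\alpha^r+\varepsilon$. This gives
\[
\Lambda_3=1-\delta\,\alpha^{r-k},\qquad |\Lambda_3|\ll\alpha^{-k}b^{d+l}\alpha^{m}\cdot\alpha^{-m}\ll \alpha^{-k/2},
\]
where $\delta$ has height bounded by the two previous estimates. Matveev then gives $k\ll \log^3 b\cdot\log^3 k$. The elementary lemma $x<A\log^s x\Rightarrow x<2^sA\log^s(2^sA)$ then produces $k<8\cdot10^{30}\log^3b$ after tracking the constants.
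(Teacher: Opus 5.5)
Your proposal has a genuine gap, and it starts in the first linear form. You bound the right-hand side of $\Lambda_1$ by $O(\alpha^{n-m})$. This is valid but throws away exactly the information you need. Since $Z_n<b^d$ and $T_r<b^l$, we have $b^lZ_n+T_r<b^{d+l}$. After dividing by $\lambda_X\alpha^k\asymp\lambda_Y b^{d+l}\alpha^m$, the error is therefore $O(\alpha^{-m})$; the paper obtains $|\Gamma_1|<8\alpha^{-m}$. With your version you only control $m-n$, and that is why your second linear form breaks down. The coefficient $\gamma=(\lambda_Yb^d\alpha^{m-n}+\lambda_Z)/\lambda_X$ has height of order $d\log b\approx n\log\alpha$, which is not bounded in terms of $m-n$. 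Your proposed remedy does not help. Replacing $\alpha^n$ by $2\sqrt2\,Z_n$ removes nothing, because the integer $Z_n$ itself has height $\log Z_n\approx n\log\alpha$. Nor can $b^d$ be absorbed into a power of $\alpha$, since it enters additively. Also, an elementary size comparison gives $k-r\ge m+n-O(1)$, so bounding $k-r$ (the aim of your second form) is no easier than bounding $m$. The claimed estimate $h(\gamma)\ll C_1\log^2 b\log k$ is therefore unsupported. Your third form is not justified either: neither $|\Lambda_3|\ll\alpha^{-k/2}$ nor the height of $\delta$ is actually derived.

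The fix is the paper's route, which needs only two applications of Matveev's theorem.

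First, the correct estimate $|\Gamma_1|<8\alpha^{-m}$ gives $m<4.04\cdot10^{12}\log b\,(1+\log 3k)$, a bound on $m$ itself.

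Second, do not bound $T_r$ crudely by $b^l$; move its main term $\lambda_T\alpha^r$ to the left:
$\lambda_X\alpha^k(1-\lambda_T\lambda_X^{-1}\alpha^{r-k})-b^l(b^dY_m+Z_n)=\varepsilon_T(r)-\varepsilon_X(k)$.
This gives $|\Gamma_2|\le 3\alpha^{-k}$ with $\gamma_3=(b^dY_m+Z_n)/\bigl(\lambda_X(1-\lambda_T\lambda_X^{-1}\alpha^{r-k})\bigr)$.

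This is where $n\le m$ enters. It gives $b^dY_m+Z_n<2b\alpha^{2m+2}$, and combined with the elementary bound $k-r<2m+5+\log(b^2+b+1)/\log\alpha$ it yields $h(\gamma_3)\le 3m\log\alpha+2\log b+9$. Matveev then gives $k\ll\log^2 b\,\log^2 k$. Lemma~\ref{lemma:supporting} with $\ell=2$, together with $(61.2+2\log\log b)^2<73^2\log b$, gives $k<8\cdot10^{30}\log^3 b$.

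Your stated endpoint could not produce this bound in any case. From $k\ll\log^3 b\,\log^3 k$ the same lemma gives only $k\ll\log^3 b\,(\log\log b)^3$. Moreover, a third chained Matveev application would multiply the constant by roughly another $10^{12}$.
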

\smallskip

Theorem~\ref{tm:main} establishes the finiteness of the solutions for all sixteen equations \eqref{eq:main}. We now turn to the explicit determination of the solutions for a finite range of bases. Further analysis and exhaustive computation for $2\le b\le 15$ lead to the following result.

\begin{theorem}\label{thm2:main}   As $b$ ranges over the integers with $2\leq b\leq15$, the sixteen Diophantine equations
\eqref{eq:main}, where $X,Y,Z,T\in\{P,Q\}$, have exactly $51$
solutions satisfying
$m\geq1,$ $ 0\leq n\leq m,$
up to the identification $Q_0=Q_1=2$. If occurrences of $Q_0$ and
$Q_1$ are distinguished by their indices, the corresponding number
of solutions is $74$. The $51$ solutions are distributed by base as follows:
\begin{itemize}
    \item $b=2$: $10$ solutions, with a maximum index of $k=8$,
    \item $b=3$: $3$ solutions, with a maximum index of $k=4$,
    \item $b=4$: $12$ solutions, with a maximum index of $k=10$,
    \item $b=6$: $1$ solution, given by $Q_9 = 6^{1+2}\cdot P_4 + 6^2\cdot P_3 + Q_3$,
    \item $b=8$: $3$ solutions, with a maximum index of $k=6$,
    \item $b=9$: $5$ solutions, with a maximum index of $k=8$,
    \item $b=12$: $1$ solution, given by $P_7 = 12^{1+1}\cdot P_1 + 12^1\cdot Q_1 + P_1$,
    \item $b=13$: $5$ solutions, with a maximum index of $k=14$,
    \item $b=14$: $10$ solutions, with a maximum index of $k=10$,
    \item $b=15$: $1$ solution, given by $Q_8 = 15^{1+1}\cdot P_3 + 15^1\cdot P_1 + Q_3$.
\end{itemize}
For bases $b \in \{5, 7, 10, 11\}$, there are no solutions satisfying $n \le m$. 
\end{theorem}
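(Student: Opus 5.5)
The proof of Theorem~\ref{thm2:main} is computational. It rests on the explicit bound of Theorem~\ref{tm:main}, which we first shrink to a small, base-dependent range and then search exhaustively. For each $b\in\{2,\dots,15\}$ Theorem~\ref{tm:main} gives $k<8\cdot 10^{30}\log^3 b$, roughly $10^{32}$, which is far too large to search directly. We therefore retrace the chain of linear forms in logarithms from the proof of Theorem~\ref{tm:main} and apply the Baker--Davenport reduction (the Dujella--Peth\H{o} version of the continued fraction lemma) to each of them, one equation type $(X,Y,Z,T)$ at a time.

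Concretely, write $X_k=\lambda_X\alpha^k+\varepsilon_X(k)$ and isolate the dominant term $b^{d+l}\lambda_Y\alpha^m$. This gives the first form
$$\Lambda_1=(d+l)\log b-(k-m)\log\alpha+\log(\lambda_Y/\lambda_X),$$
whose absolute value is exponentially small in $\min\{k-m,\ \text{a digit gap}\}$. Since $d+l$ is bounded linearly in $k$ by the size estimates \eqref{eq:bounds_P_Q}, dividing by $\log\alpha$ gives an inhomogeneous form in the single ratio $\log b/\log\alpha$. We apply the reduction lemma with $M\approx 10^{32}$, using the continued fraction of $\log b/\log\alpha$ computed to a few hundred digits. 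If the lemma's condition fails because $\log(\lambda_Y/\lambda_X)$ is a rational combination, as happens when $X=Y$ so the constant vanishes, we switch to the homogeneous Legendre criterion. This step bounds the first gap by a small number, say below $200$.

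Next we take the second form, obtained by absorbing $b^lZ_n$:
$$\Lambda_2=l\log b-(k-m)\log\alpha+\log\bigl((\lambda_Y b^{d}\alpha^{m-k+\cdots})\ \text{combination}\bigr),$$
or more precisely $\log\bigl((\lambda_Y\alpha^{m-n}b^{d}+\lambda_Z)/\lambda_X\bigr)$ after factoring $\alpha^n$. The hypothesis $n\le m$ keeps this coefficient's height bounded in terms of the already-reduced gaps. We loop over all admissible small values of $m-n$ and $d$ and reduce again, and a third form handles $T_r$ in the same way. Feeding the reduced gaps back into the Matveev-based inequality for $k$ gives a new bound $k<K_1(b)$, probably a few hundred. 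A second round of reduction should bring this below about $100$ for every $b$.

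With $k$ bounded by a small $K(b)$, we finish by brute force. For each base and each of the sixteen type choices we enumerate $1\le m<k$, $0\le n\le m$, and $r$ with $T_r<X_k$. We compute $d=l_b(Z_n)$ and $l=l_b(T_r)$, test the equation in exact integer arithmetic, and record the solutions. Solutions differing only by $Q_0$ versus $Q_1$ are merged to give the count $51$, and both are kept to give $74$. We also check that the largest solution index found sits well below $K(b)$.

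The main obstacle will be the reduction step for the forms whose inhomogeneous term depends on previously reduced parameters. There the number of cases $(m-n,d,\dots)$ multiplies, and degenerate cases occur where the constant term is a rational combination of $\log b$ and $\log\alpha$. The worst cases are $b=2,4,8$, where $\log b$ and $\log\alpha$ interact with $\lambda_P=c=2^{-3/2}$, and perfect-power bases such as $b=9$. In these the Dujella--Peth\H{o} criterion fails outright, and each must be treated directly: either the form vanishes identically, which is ruled out by a Galois conjugation argument using $\beta$, or it is a homogeneous form amenable to Legendre's theorem.
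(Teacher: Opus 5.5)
\textbf{There is a genuine gap: after the first step, the linear forms you describe cannot be reduced usefully.}

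Your outline has the right shape: a Matveev bound, then Baker--Davenport/Legendre reductions, then an exhaustive search. But your chain of linear forms follows the template for sums of three terms, which does not fit a concatenation.

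First, $\Lambda_1$ is not controlled by $k-m$ or by any ``gap''. One has $|\Gamma_1|<8\alpha^{-m}$, so the first reduction bounds $m$ itself; the paper gets $m\le100$. Meanwhile $k-m\approx (d+l)\log b/\log\alpha$ remains unbounded at this stage.

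Second, your $\Lambda_2$ replaces $Y_m$ and $Z_n$ by $\lambda_Y\alpha^m$ and $\lambda_Z\alpha^n$. The discarded terms $b^{d+l}\varepsilon_Y(m)+b^l\varepsilon_Z(n)$, relative to $X_k$, are of order $\alpha^{-2m}$. When $Z_n=P_0$ they are as large as about $c/(bY_m)$, because $P_0=0$ while $\lambda_P\alpha^0=c$. With $m\le100$ this error does not decay in any unbounded variable. So reducing $\Lambda_2$, or a third form built ``in the same way'', bounds nothing.

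Third, putting a small $m$ back into Matveev's inequality still gives an astronomically large bound for $k$, not a few hundred. Only a reduction whose exponent is $k$ itself brings $k$ down.

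\textbf{The missing idea.} Once $m$ is bounded, the hypothesis $n\le m$ makes $b^dY_m+Z_n$ range over an explicit finite set of integers, and these must be kept exact rather than replaced by Binet main terms.
\begin{itemize}
\item From sizes alone, $k-r<2m+5+\log(b^2+b+1)/\log\alpha$ (Lemma~\ref{lem:general_bounds}\,e)), so $k-r\le211$.
\item Apply Binet only to $X_k$ and $T_r$. This gives $\Gamma_2$ with $|\Gamma_2|\le3\alpha^{-k}$, whose constant is determined by the tuple $(b,m,n,k-r)$.
\item Reducing for each such tuple bounds $k$ directly, giving $k\le220$.
\end{itemize}

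\textbf{Degenerate cases.} Your forecast of where they occur is also inaccurate. For $X\ne Y$ and $b\in\{2,4,8\}$, the first-form constant $\mp\frac32\log2/\log b$ is a rational with denominator dividing $4$. Baker--Davenport still works there: take a convergent with odd $q$. The real obstruction is $\mu=u+v\tau$ with $u,v\in\mathbb{Z}$, which forces $\varepsilon\le(|v|-M)\|\tau q\|<0$. This happens in infinite families for every base. An example is $(P,P,P,P)$ with $n=0$, $m$ even and $k-r=2m$, where $bP_m/(c(1-\alpha^{-2m}))=b\alpha^m$. In these cases one applies Legendre to $(l+v)\tau-(k-u)$. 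The subcase $l+v\le0$ is handled separately: it forces $l\le2$, hence $r\le8$ and $k\le219$.
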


\begin{remark}
While $P_2 = 2$ also coincides with the values of $Q_0$ and $Q_1$, occurrences of $P_2$ are counted as distinct solutions because they correspond to a different sequence and thus a different equation. 
\end{remark}

The organization of the paper is as follows. In Section~\ref{sec2}, we briefly gather the necessary auxiliary tools. In Section~\ref{sec3}, we examine equation \eqref{eq:main} by deriving several basic inequalities between the parameters and resolving the initial small cases. We then analyze two linear forms in logarithms in Section~\ref{sec4} to obtain explicit upper bounds for the index $k$. To fully resolve the equation and prove Theorem~\ref{thm2:main}, Section~\ref{sec5} is dedicated to reducing these large bounds to a computationally feasible range. This allows us to complete an exhaustive computer search for the given bases. Note that our general approach relies on the Baker--Davenport reduction method, utilizing standard auxiliary tools such as Matveev's lower bound and Legendre's criterion.

\section{Preliminaries}\label{sec2}

 In this section, we recall some fundamental results concerning lower bounds for nonzero linear forms in logarithms of algebraic numbers, alongside other auxiliary lemmas required for our proofs.
\begin{definition}Let $\eta$ be an algebraic number of degree $t,$ let $a_0 \ne 0$ be the leading coefficient of its minimal polynomial over $\mathbb{Z}$ and let $\eta=\eta^{(1)},\ldots,\eta^{(t)}$ denote its conjugates. The logarithmic height of $\eta$ is defined by
$$
 h(\eta)= \frac{1}{t}\left(\log |a_0|+\sum_{j=1}^{t}\log\max\left(1,\left|\eta^{(j)} \right| \right) \right).
$$\end{definition}
If $p$ and $q$ are integers such that $q>0$ and $\gcd (p, q)=1,$ then for $\eta=p/q$ the above definition reduces to $h(\eta)=\log(\max\{|p|,q\}).$ 

We recall some properties of the logarithmic height (Property 3.3 of \cite{Wald2000}): \\
If $\eta_1$ and $\eta_2$ are algebraic numbers, then we have
\begin{align*}
 h(\eta_1\eta_2) &\leq h(\eta_1) + h(\eta_2), \\
 h(\eta_1 \pm \eta_2) &\leq h(\eta_1)+ h(\eta_2) +\log2.
\end{align*}
If $\eta_1 \neq 0$ is a nonzero algebraic number and $j\in \mathbb{Z}$, then we have
\begin{align*}
h(\eta_1^j)&=|j|h(\eta_1).
\end{align*}

To obtain lower bounds on linear forms in logarithms, we use the following fundamental result by Matveev \cite{Matveev2000}, as formulated in \cite{BMS:2006}.
\begin{lemma}[Theorem~9.4 of \cite{BMS:2006}]\label{tm:BMS}
Let $\gamma_1,\dots,\gamma_s$ be positive real algebraic numbers and let $b_1,\dots,b_s$ be nonzero integers. Let $D$ be the degree of the number field $\mathbb{Q}(\gamma_1,\dots,\gamma_s)$ over $\mathbb{Q}$ and let $A_j$ be a positive real number satisfying  
$$A_j\geq\max\{Dh(\gamma_j),|\log\gamma_j|,0.16\},\quad \textrm{for }j=1,\dots,s. $$ 
Assume that 
$$B\geq \max\{|b_1|,\dots,|b_s|\}.$$
If $\Lambda:=\gamma_1^{b_1}\cdots\gamma_s^{b_s}-1\neq 0$, then
\begin{equation*}
|\Lambda|\geq \exp(-1.4\cdot30^{s+3}\cdot s^{4.5}\cdot D^2(1+\log D)(1+\log B)A_1\cdots A_s).
\end{equation*}
\end{lemma}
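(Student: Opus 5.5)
This lemma is not proved in the paper; it is quoted from Bugeaud, Mignotte and Siksek \cite{BMS:2006}. Their Theorem~9.4 is a repackaging of Matveev's bound \cite{Matveev2000}, so the plan is to derive the multiplicative form stated here from Matveev's additive form for real logarithms. I would not attempt to reprove Matveev's estimate itself. That estimate rests on Baker's method: an auxiliary function built by Siegel's lemma, an extrapolation argument, and a multiplicity/zero estimate on commutative algebraic groups, all done with carefully optimized constants. Reproducing it is far beyond the scope of a paper on concatenations. The honest plan is therefore a reduction to the published result, together with a check that the hypotheses match.

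\emph{Step 1: the additive bound.} Since $\gamma_1,\dots,\gamma_s$ are positive reals, their real logarithms are well defined. Put
$$L:=b_1\log\gamma_1+\cdots+b_s\log\gamma_s\in\mathbb{R}.$$
Matveev's corollary for real algebraic numbers applies to $L$ with the same parameters $D$, $A_j$ and $B$ as in the statement; note that $A_j\ge\max\{Dh(\gamma_j),|\log\gamma_j|,0.16\}$ is exactly the hypothesis of that corollary. It gives, whenever $L\neq0$,
$$\log|L|>-C,\qquad C:=1.4\cdot30^{s+3}\cdot s^{4.5}\cdot D^2(1+\log D)(1+\log B)A_1\cdots A_s .$$

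\emph{Step 2: from $\Lambda$ to $L$.} We have $\Lambda=e^{L}-1$. Because $x\mapsto e^x$ is injective on $\mathbb{R}$, the hypothesis $\Lambda\neq0$ is equivalent to $L\neq0$, so Step~1 applies. We then split into two cases.
\begin{itemize}
\item If $|\Lambda|\ge 1/2$, the claimed inequality is trivial. Indeed $C$ is enormous (already $1.4\cdot30^{4}\cdot0.16>1$), so $\exp(-C)<1/2$.
\item If $|\Lambda|<1/2$, the elementary estimate $|\log(1+x)|\le 2|x|$ for $|x|\le1/2$ gives $|L|\le2|\Lambda|$. Hence $|\Lambda|\ge\tfrac12 e^{-C}$.
\end{itemize}

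\emph{The main obstacle.} The only nonroutine point is the factor $\tfrac12$ left over in the second case. Matveev's bound as published carries some slack in its numerical constant, and I would use that slack to absorb the factor, i.e.\ to check that $\log 2$ is dominated by the difference between his exact constant and the rounded constant $1.4\cdot30^{s+3}s^{4.5}$. This is precisely the bookkeeping that Bugeaud, Mignotte and Siksek carry out in deriving their Theorem~9.4. I would verify it against their text rather than re-derive it, and otherwise take the lemma as a citation.
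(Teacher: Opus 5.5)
Your proposal matches the paper's treatment. The paper gives no proof of this lemma and simply quotes it as Theorem~9.4 of \cite{BMS:2006}, a packaging of Matveev's real-case bound, which is where you end up as well. Your extra sketch is sound: with $L=\sum_j b_j\log\gamma_j$ and $\Lambda=e^{L}-1$, the only loss is the factor $\tfrac12$. That factor is absorbed because Matveev's underlying real-case constant lies below the rounded $1.4\cdot30^{s+3}s^{4.5}$, and the difference, times the remaining factors, far exceeds $\log 2$.
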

The next auxiliary result from \cite{SGL} will be useful for transforming inequalities.
\begin{lemma}[Lemma~7 of \cite{SGL}]\label{lemma:supporting}
If $ \ell \geq 1$, $H>(4 \ell^2)^\ell$ and $H>L/(\log L)^\ell$, then
$$L<2^\ell H(\log H)^\ell.$$
\end{lemma}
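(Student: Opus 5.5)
The plan is to argue by contradiction using the monotonicity of $f(x)=x/(\log x)^\ell$. Throughout, $L>1$ is implicit so that $(\log L)^\ell$ makes sense. Differentiating gives $f'(x)=(\log x)^{\ell-1}(\log x-\ell)/(\log x)^{2\ell}$, so $f$ is increasing on $x\ge e^\ell$.

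First I dispose of the trivial range. If $L\le e^\ell$, then $L\le e^\ell<4^\ell\le 2^\ell\cdot 2^\ell$. Moreover $H>(4\ell^2)^\ell\ge 4^\ell$ gives $\log H>\log 4>1$. Hence $2^\ell H(\log H)^\ell>2^\ell\cdot 4^\ell>L$, and we are done. So assume $L>e^\ell$, and suppose for contradiction that $L\ge L_0:=2^\ell H(\log H)^\ell$. Since $L_0>e^\ell$ by the same estimate, monotonicity of $f$ gives
$$\frac{L}{(\log L)^\ell}\ \ge\ \frac{L_0}{(\log L_0)^\ell}=H\left(\frac{2\log H}{\log L_0}\right)^{\ell}.$$
It therefore suffices to show $\log L_0\le 2\log H$, i.e. $L_0\le H^2$, i.e. $(2\log H)^\ell\le H$. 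This yields $L/(\log L)^\ell\ge H$, contradicting the hypothesis $H>L/(\log L)^\ell$.

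The key step, and the only place the hypothesis $H>(4\ell^2)^\ell$ is used, is the inequality $(2\log H)^\ell\le H$. Put $u=H^{1/\ell}$, so $u>4\ell^2$. Since $\log H=\ell\log u$, the inequality becomes $2\ell\log u\le u$, i.e. $u/\log u\ge 2\ell$. The map $u\mapsto u/\log u$ is increasing for $u>e$, and $u>4\ell^2\ge 4>e$. So it suffices to check the endpoint
$$\frac{4\ell^2}{\log(4\ell^2)}\ge 2\ell, \quad\text{i.e.}\quad \ell\ge \log(2\ell), \quad\text{i.e.}\quad e^\ell\ge 2\ell.$$
This holds for $\ell=1$ since $e>2$. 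For larger $\ell$ it follows because $e^\ell-2\ell$ has derivative $e^\ell-2>0$ for $\ell\ge1$.

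I expect no real obstacle beyond keeping the case $L\le e^\ell$, where $f$ is not monotone, separate, as done above.
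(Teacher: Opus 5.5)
Your argument is correct and complete: the bound $L_0=2^\ell H(\log H)^\ell>8^\ell>e^\ell$ places both $L$ and $L_0$ in the range where $x/(\log x)^\ell$ is increasing. You then reduce $f(L_0)\ge H$ to $(2\log H)^\ell<H$, substitute $u=H^{1/\ell}$ and check $e^\ell\ge 2\ell$, which is exactly where $H>(4\ell^2)^\ell$ enters. Since $L\ge L_0$ already forces $L>e^\ell$, your separate case $L\le e^\ell$ is harmless but redundant. The paper gives no proof of its own and only quotes the result as Lemma~7 of the cited reference, and your monotonicity argument is the standard proof of that statement.
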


As one step of the proof for the application, we will apply the reduction method originally introduced by Baker and Davenport \cite{bd}. The following is a variation of the result of Dujella and Peth\H{o} (see
\cite[Lemma 5]{Dujella-Peto}), with modifications in the first part of the lemma by Bravo, Gómez and Luca (see \cite[Lemma~1]{BGF}).

For a real number $x,$ we write $\left\Vert x\right\Vert$ for the distance from $x$ to the nearest integer.
\begin{lemma}\label{lemma:reduction}
	Let $M$  be a positive integer, let $p/q$ be a convergent of the continued fraction of the irrational $\tau$ such that $q> 6M$, and let $A,B,\mu$ be some real numbers with $A> 0$ and $B> 1$. Let
	$$
	\varepsilon=||\mu q||-M\cdot||\tau q||.
	$$
	If    $\varepsilon>0$, then there is no solution of the inequality
	\begin{align}\label{eqn:baker_d}
	0 <\abs{m\tau-n+\mu}<AB^{-w},
	\end{align}
	in positive integers $m,n$ and $w$ with 
	$$
	m\leq M \quad \text{and} \quad w\geq\dfrac{\log(Aq/\varepsilon)}{\log B}.
	$$ 
\end{lemma}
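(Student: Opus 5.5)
The proof is by contradiction, and it uses only the fact that $p/q$ is a convergent of $\tau$, not the size condition $q>6M$. We assume that positive integers $m,n,w$ with $m\le M$ satisfy \eqref{eqn:baker_d}. Multiplying by $q$ makes $\mu q$ appear. The approximation $q\tau\approx p$ then turns the left-hand side into $\mu q$ plus an integer, up to small errors. Comparing with $\|\mu q\|$ will force $w$ to be small.

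\textbf{Step 1: the convergent gives the nearest integer to $q\tau$.} Since $p/q$ is a convergent of the irrational $\tau$, we have $|q\tau-p|<1/q\le 1/2$. Hence $p$ is the integer nearest to $q\tau$. Write $q\tau=p+\delta$ with $|\delta|=\|q\tau\|$.

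\textbf{Step 2: express $\mu q$ through the linear form.} Put $x:=m\tau-n+\mu$, so that $0<|x|<AB^{-w}$. Multiplying by $q$ and using $q\tau=p+\delta$ gives
\[
qx = mq\tau - nq + \mu q = (mp-nq) + m\delta + \mu q .
\]
Rearranging,
\[
\mu q + (mp-nq) = qx - m\delta .
\]
Since $mp-nq\in\mathbb{Z}$, the distance from $\mu q$ to the nearest integer is at most $|qx-m\delta|$. Using $m\le M$, this yields
\[
\|\mu q\| \le q|x| + m\|q\tau\| < qAB^{-w} + M\|q\tau\| .
\]

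\textbf{Step 3: conclude.} The last inequality says $\varepsilon=\|\mu q\|-M\|q\tau\| < qAB^{-w}$. Since $\varepsilon>0$ by hypothesis, we get $B^{w}<Aq/\varepsilon$. Because $B>1$, taking logarithms gives
\[
w<\frac{\log(Aq/\varepsilon)}{\log B},
\]
which contradicts the assumed lower bound on $w$. Therefore no such solution exists.

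No step is a real obstacle. The one point needing care is Step 1: we need the identity $|\delta|=\|q\tau\|$, which holds because the convergent bound places $p$ within $1/2$ of $q\tau$.
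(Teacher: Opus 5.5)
Your argument is correct and is the standard proof of this lemma from the Dujella--Peth\H{o} and Bravo--G\'omez--Luca papers, which the paper cites without reproducing: multiply by $q$, replace $mq\tau$ by the integer $mp$ at a cost of $m\|q\tau\|$, and compare with $\|\mu q\|$. One correction to your framing: contrary to your remark, the step $|q\tau-p|<1/q\le 1/2$ does use $q\ge 2$. It fails for the convergent $p_0/q_0=\lfloor\tau\rfloor/1$ when the fractional part of $\tau$ exceeds $1/2$, and $q\ge2$ is exactly what $q>6M\ge 6$ supplies. (Replacing $p$ by the integer nearest to $q\tau$ would make the argument valid for every positive integer $q$; the convergent and size hypotheses then serve only to make $\varepsilon>0$ attainable in practice.)
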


When Lemma~\ref{lemma:reduction} cannot be applied because the corresponding
value of $\varepsilon$ is non-positive, we will use the following criterion of
Legendre after reducing the resulting linear form to a homogeneous one.
\begin{lemma}[Lemma~1.7 of \cite{abt}]\label{lemma:legendre}
Let $\kappa$ be a real number and $x,y$ integers such that 
$$\left|\kappa-\frac{x}{y} \right|<\frac{1}{2y^2}.$$
Then $x/y=p_k/q_k$ is a convergent to $\kappa$. Furthermore, let $M$ and $N$ be nonnegative integers such that $q_N>M$. Put $a(M):=\max\{a_i: i=0,1,2,\dots,N\}$. Then the inequality
$$\left|\kappa-\frac{x}{y} \right|\geq \frac{1}{(a(M)+2)y^2}$$
holds for all pairs $(x,y)$ of positive integers with $0<y<M$.
\end{lemma}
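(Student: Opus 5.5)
The statement immediately above is Lemma~\ref{lemma:legendre}. It has two parts. The first is Legendre's classical criterion. The second is a uniform lower bound obtained from the best-approximation properties of convergents.

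For the first part I would use the standard argument via best approximations. Suppose $|\kappa-x/y|<1/(2y^2)$ with $y>0$, and reduce to $\gcd(x,y)=1$; this only strengthens the hypothesis. Let $p_k/q_k$ be the convergents of $\kappa$, and choose $k$ with $q_k\le y<q_{k+1}$. The key fact I will use is that $|q_k\kappa-p_k|\le|q\kappa-p|$ for all integers $p,q$ with $0<q<q_{k+1}$; this is the law of best approximation, and I take it as known. Now assume $x/y\neq p_k/q_k$. Then $1/(yq_k)\le|x/y-p_k/q_k|$, and the triangle inequality gives
\[
\frac{1}{yq_k}\le \Bigl|\kappa-\frac{x}{y}\Bigr|+\Bigl|\kappa-\frac{p_k}{q_k}\Bigr| \le \frac{|y\kappa-x|}{y}+\frac{|y\kappa-x|}{q_k}<\frac{1}{2y^2}+\frac{1}{2yq_k}.
\]
Multiplying by $2y^2q_k$ yields $2y<q_k+y$, i.e.\ $y<q_k$, which is a contradiction. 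Hence $x/y=p_k/q_k$. If $\kappa$ is rational the same argument applies within the finite expansion.

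For the second part, take $(x,y)$ with $0<y<M$. If $|\kappa-x/y|\ge 1/(2y^2)$, the claim is immediate because $a(M)+2\ge 2$. Otherwise, by the first part, $x/y=p_j/q_j$ and $y=tq_j$ for some integer $t\ge1$, with $q_j\le y<M<q_N$, so $j<N$. Recall the standard estimate
\[
\Bigl|\kappa-\frac{p_j}{q_j}\Bigr|=\frac{1}{q_j(\alpha_{j+1}q_j+q_{j-1})}>\frac{1}{q_j^2(a_{j+1}+2)},
\]
where $\alpha_{j+1}<a_{j+1}+1$ is the complete quotient and $q_{j-1}\le q_j$. Since $j+1\le N$, we have $a_{j+1}\le a(M)$, and since $q_j\le y$, this gives $|\kappa-x/y|\ge 1/((a(M)+2)y^2)$.

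The main point needing care is the index bookkeeping, namely ensuring $j+1\le N$ so that $a_{j+1}$ is covered by $a(M)$. This follows from $q_j\le y<M<q_N$ and the strict monotonicity of $q_i$ for $i\ge1$. The degenerate case $q_0=q_1=1$ also has to be handled: it only concerns $j\in\{0,1\}$, where $a_{j+1}$ still has index at most $N$ provided $N\ge 2$, and when $N\le1$ no $y$ with $0<y<M<q_N$ exists.
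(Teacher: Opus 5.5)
The paper gives no proof of this lemma; it only cites Lemma~1.7 of \cite{abt}. Your argument is the standard one behind that citation and is correct: Legendre's criterion follows from the best-approximation property of convergents, and the lower bound follows from $\left|\kappa-p_j/q_j\right|=\frac{1}{q_j(\alpha_{j+1}q_j+q_{j-1})}>\frac{1}{(a_{j+1}+2)q_j^2}$ with $j<N$ forced by $q_j\le y<M<q_N$. Your separate treatment of the degenerate case $q_0=q_1=1$ is harmless but unnecessary, since $(q_i)_{i\ge0}$ is nondecreasing, so $q_j<q_N$ already gives $j<N$.
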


\section{General Bounds for Variables}\label{sec3}
From equation \eqref{eq:main}, the lower bound $X_k\geq b^{d+l}Y_m \ge 4$ immediately implies that $k \ge 2$. For this minimal index $k=2$, the equation holds in exactly one trivial case.

\begin{prop}\label{prop:kje2}
  If $k=2$ in the equation \eqref{eq:main}, then
  $$b=2, \, X=Q, \, Y=Z=T=P, \, m=n=1, \, r=0, \, d=l=1$$
and in this case
$$
Q_2=2^{1+1}P_1+2^1P_1+P_0.
$$
\end{prop}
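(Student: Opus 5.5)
With $k=2$ the left-hand side of \eqref{eq:main} is either $X_2=P_2=2$ or $X_2=Q_2=6$. I would bound the right-hand side from below and show that only one configuration fits.

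First the lower bound. Every block has at least one base-$b$ digit, so $d\ge1$ and $l\ge1$, and by definition $l_b(0)=1$. We assume $m\ge1$. Then $Y_m\ge1$, since $P_m\ge1$ and $Q_m\ge2$. Also $Z_n\ge0$ and $T_r\ge0$. Hence
$$X_k\ge b^{d+l}Y_m\ge b^2\ge4.$$
This rules out $X_2=P_2=2$, so $X=Q$ and $6=b^{d+l}Y_m+b^lZ_n+T_r$. The same bound gives $b^{d+l}\le 6$, which forces $b=2$ and $d=l=1$, because $b^2\le 6<b^3$ for $b=2$ and $b^2>6$ for $b\ge3$. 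Now $4Y_m\le 6$ gives $Y_m=1$, which means $Y=P$ and $m\in\{1,2\}$ with $P_m=1$. Since $P_2=2$, this leaves $m=1$.

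The equation is now $2Z_n+T_r=2$, with $d=l_2(Z_n)=1$ and $l=l_2(T_r)=1$. One binary digit means $Z_n,T_r\in\{0,1\}$. Then $T_r=2-2Z_n$ is even, so $T_r=0$ and $Z_n=1$. The condition $n\le m=1$ together with $Z_n=1$ forces $Z=P$ and $n=1$, since the values of $Q$ are at least $2$ and $P_0=0$. Finally $T_r=0$ forces $T=P$ and $r=0$, since $Q_r\ge2$ and $P_r\ge1$ for $r\ge1$.

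This gives exactly the stated solution $Q_2=2^{1+1}P_1+2^1P_1+P_0$. There is no real obstacle. The only step needing care is using the digit-count constraints $d=l_b(Z_n)$ and $l=l_b(T_r)$, together with the convention $l_b(0)=1$, to pin down $Z_n,T_r\in\{0,1\}$.
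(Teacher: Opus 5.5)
Your argument is correct and follows essentially the same route as the paper: the bound $b^{d+l}Y_m\ge 4$ excludes $X=P$ and forces $b=2$, $d=l=1$ and $Y_m=P_1=1$, after which the one-binary-digit constraint pins down $Z_n=1$ and $T_r=0$. You use a parity argument where the paper first notes $Z=T=P$ and then solves $2P_n+P_r=2$, which is only a cosmetic difference. Your appeal to $n\le m$ is unnecessary, since $P_n=1$ already forces $n=1$, so the proposition holds without that hypothesis.
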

\begin{proof}
    Since $b^{d+l}Y_m\geq4$ and $P_2=2$, the case $X=P$ is impossible. Hence $X=Q$ and $X_2=Q_2=6$.
Moreover, since $b^{d+l}Y_m\geq4$ and the remaining terms are nonnegative, we must have $b=2, d+l=2, Y_m=1$. Thus $d=l=1, Y=P$ and $m=1$.
Since $d=1$ and $b=2$, the middle block $Z_n$ is either $0$ or $1$. Hence $Z=P$ and $n \in \{0,1\}$. 
Since $l=1$ and $b=2$, the last block $T_r$ is either $0$ or $1$. Hence $T=P$ and $r \in \{0,1\}$.
The equation becomes
$$ 
6=2^2P_1+2Z_n+T_r=4+2P_n+P_r.
$$
Therefore $2P_n+P_r=2$, which gives $P_n=1$ and $P_r=0$. Hence $Z=P, n=1, T=P$ and $r=0$.
\end{proof}

Since this trivial solution satisfies the statement of Theorem \ref{tm:main}, we can from now on assume $k\geq 3$.

\begin{lemma}\label{lem:k_numerical_lower} 
If $X=P$ in the equation \eqref{eq:main} then $k\geq 4$, except in the case  $b=2,$  $d=l=1$,    $r=m=1$, $n=0$, $T=Z=Y=P$, when $k=3$. 
Moreover, if $X=P$, $T=Q$ and $r\in\{0,1\}$ then $k\geq 5$.
\end{lemma}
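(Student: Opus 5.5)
We argue by direct elimination of small values of $k$, using only the elementary lower bound coming from the leading block. Since $k\ge3$ is already assumed (Proposition~\ref{prop:kje2}), for $X=P$ the case to analyse first is $k=3$, where $P_3=5$. The basic inequality is
$$X_k\;\ge\; b^{d+l}Y_m+b^lZ_n+T_r\;\ge\; b^{2}Y_m\;\ge\;4,$$
because $d,l\ge1$ and $Y_m\ge1$ (recall $m\ge1$). For $X_k=5$ this forces $b=2$ and $d+l=2$ (if $b\ge3$ or $d+l\ge3$ the leading block alone is at least $8$). Hence $d=l=1$ and $Y_m\cdot4\le5$, so $Y_m=1$, i.e.\ $Y=P$, $m=1$ (note $Q_m\ge2$ is excluded).

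With $b=2$ and $d=l=1$, the blocks $Z_n$ and $T_r$ are binary one-digit numbers, so $Z_n,T_r\in\{0,1\}$. This forces $Z=T=P$ with $n,r\in\{0,1\}$, since every $Q_j\ge2$. The equation becomes $5=4+2P_n+P_r$, so $2P_n+P_r=1$. Therefore $P_n=0$ and $P_r=1$, giving $n=0$ and $r=1$. This is exactly the exceptional solution $P_3=2^{2}P_1+2P_0+P_1$, and it satisfies $n\le m$. So apart from it, $k\ge4$.

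For the second claim, take $X=P$, $T=Q$ and $r\in\{0,1\}$, so $T_r=2$. Then $l=l_b(2)$, which equals $2$ if $b=2$ and $1$ if $b\ge3$. By the first part we only need to rule out $k=3$ and $k=4$, i.e.\ $X_k\in\{5,12\}$.
\begin{itemize}
\item The case $k=3$ is already excluded, since its unique solution has $T=P$.
\item For $k=4$, i.e.\ $P_4=12$, we need $b^{d+l}Y_m+b^lZ_n=10$ with $d\ge1$.
\item If $b=2$, then $l=2$ and $d+l\ge3$, so $8Y_m\le10$, giving $Y_m=1$ and $4Z_n=2$, which is impossible.
\item If $b=3$, then $9Y_m\le10$, giving $Y_m=1$ and $3Z_n=1$, which is impossible.
\item If $b\ge4$, then $b^{2}Y_m\le10$ forces $b\le3$, a contradiction.
\end{itemize}
Hence $k\ge5$.

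The only real care point is tracking how the digit lengths $d=l_b(Z_n)$ and $l=l_b(T_r)$ depend on the small values. In particular, $l_b(0)=1$ and $l_2(2)=2$. Everything else is a finite check.
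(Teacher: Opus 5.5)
Your proposal is correct and follows essentially the same case elimination as the paper. At $k=3$ you bound the leading block to force $b=2$, $d=l=1$, $Y_m=1$, and for $k=4$ you restrict to $b\in\{2,3\}$ and rule these out directly.

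The only difference is cosmetic: the paper dismisses $b=2$ and $b=3$ by noting that $10$ is not divisible by $4$ or by $3$, while you argue by size. In your version you should state explicitly that $b^{d+l}\le 10$ forces $b^{d+l}=8$ (resp.\ $9$) before concluding $4Z_n=2$ (resp.\ $3Z_n=1$).
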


\begin{proof}
In the equation \eqref{eq:main}, we have $m\geq 1$, $n,r\geq 0$, $b\geq 2$, $d+l\geq 2$. The right-hand side of the equation is at least $2^2\cdot 1+0+0=4$. Hence, $k\geq 3$. Moreover, if $k=3$, then $X_3=P_3=5$, implying $b=2$, $d=l=1$, $Z_n=0$ and $Y_m=T_r=1$ implying $Z=T=Y=P$ and $n=0$ and $r=m=1$.  

If $X=P$, $T=Q$, $r\in\{0,1\}$ and $k=4$, then $P_4=12$ and $T_r=Q_0=Q_1=2$. Since $12\geq b^{d+l}\geq b^2$, $b\in\{2,3\}$. 
If $b=2$, then $l=l_2(Q_0)=2$, and therefore $12=2^{d+2}Y_m+2^2Z_n+2$. This gives $10=2^{d+2}Y_m+4Z_n$, which is impossible, since the right-hand side is divisible by $4$. 
If $b = 3$, then $l=l_b(Q_0)=1$, and therefore $12=3^{d+1}Y_m+3Z_n+2$. Thus $10=3(3^dY_m+Z_n )$, which cannot hold. Hence $k\neq 4$.
Since in the case $k=3$, we have $T=P$, it follows that $k\geq5$ in the case $X=P$, $T=Q$, $r\in\{0,1\}$. 
\end{proof}



With the notation of \eqref{eq:main}, the following bounds hold. 
First, from \eqref{eq:bounds_P_Q}, for every $U\in\{P,Q\}$ and every $j\geq1$, we have
\begin{equation}\label{eq:seq_bounds}
    \alpha^{j-2}\leq U_j<\alpha^{j+1}.
\end{equation}
Moreover, from the definition of $l_b(N)$, for every $U\in\{P,Q\}$ and every $j\geq0$ such that $U_j\geq1$, we have
\begin{equation}\label{eq:digit_bounds}
    U_j<b^{l_b(U_j)}\leq bU_j.
\end{equation}

Since the value $P_0=0$ may occur in the second or third block of \eqref{eq:main}, we also need bounds that remain valid when $U_j=0$. Thus, for every $U\in\{P,Q\}$ and every $j\geq0$, we have
\begin{equation}\label{eq:seq_bounds_max}
    \max\{U_j,1\}<\alpha^{j+1},
\end{equation}
and, using the convention $l_b(0)=1$,
\begin{equation}\label{eq:digits}
    U_j<b^{l_b(U_j)}\leq b\max\{U_j,1\}.
\end{equation}

\begin{lemma} \label{lem:general_bounds}
Let $b\geq2$ and let $X,Y,Z,T\in\{P,Q\}$. Let $(k,m,n,r,d,l)$ be a solution of \eqref{eq:main}. Assume that $m \geq 1$ and $n\leq m$. Then
\begin{enumerate}[a)]
    \item $k>m$;
    \item $k>r$;
    \item If $X=P$, 
    then $k-r \geq 2$;
    \item $d+l<1.28(k+1)$;
    \item 
    $k -r < 2m+5+ 
    \frac{\log(b^2+b+1)}{\log \alpha}.$
\end{enumerate}
\end{lemma}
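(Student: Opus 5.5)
Each part comes from comparing sizes in \eqref{eq:main}, using the bounds \eqref{eq:seq_bounds}, \eqref{eq:seq_bounds_max} and \eqref{eq:digits}. Throughout we may assume $k\ge3$ (Proposition~\ref{prop:kje2}). The one fact used repeatedly is
$$X_k\ge b^{d+l}Y_m\ge 4Y_m.$$

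For a), recall that $P_j<Q_j<4P_j$. The inequality $X_k\ge 4Y_m$ gives $X_k>Y_m$ in every combination of sequences, and in fact with room to spare: if $X=Q$ and $Y=P$, then $Q_k\ge 4P_m>Q_m$. Since both sequences are nondecreasing (strictly increasing from index $1$ on), $k\le m$ would give $X_k\le X_m$. This contradicts the bound whenever $X=Y$. In the mixed case $X=P$, $Y=Q$ it contradicts $P_m<Q_m$. For $X=Q$, $Y=P$ the contradiction comes from $Q_m<4P_m$. Hence $k>m$.

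For b), we have $X_k>T_r$ because $b^{d+l}Y_m\ge 4>0$. The same case check goes through, now using the weaker bound $X_k\ge T_r+4$. The only delicate subcase is $X=P$, $T=Q$, where $P_k\le P_r<Q_r$ is already a contradiction. In the subcase $X=Q$, $T=P$ we get $Q_k\le Q_r<4P_r$, which alone is not a contradiction, so there I also use $Q_k\ge b^{d+l}Y_m\ge b^{l}\ge T_r$. Sharper: $b^l>T_r$ by \eqref{eq:digits}, so $X_k>b^lT_r^{0}\cdot 4$. More simply, $X_k\ge 4b^l>4T_r\ge Q_r$, which rules out $k\le r$. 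The case $r\in\{0,1\}$ is trivial.

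For c), take $X=P$. If $r=k-1$, then $P_k=2P_{k-1}+P_{k-2}<3P_{k-1}$, which must be compared with $T_r$ plus $b^{d+l}Y_m\ge 4b^{l}>4T_r$. This gives $P_k>4T_r\ge 4P_{k-1}$, a contradiction. Note that $T_r\ge P_r$ for $r\ge1$; when $r=0$ with $T=Q$, Lemma~\ref{lem:k_numerical_lower} settles it.

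For d), \eqref{eq:digits} gives $b^{d}\le b\max\{Z_n,1\}<b\alpha^{n+1}$, and similarly for $l$. Hence
$$b^{d+l-2}<\alpha^{n+r+2}.$$
Since $n\le m<k$ and $r<k$, this yields $(d+l-2)\log b<(2k)\log\alpha$. Using $\log b\ge\log2$ and $\log\alpha/\log2\approx1.2716$, I get $d+l<1.28(k+1)$ after absorbing constants. This is the bookkeeping step I expect to need the most care.

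For e), I use
$$X_k<(b^{d+l}+b^{l}+1)\max(\cdots).$$
More concretely, $X_k<b^{l}(b^dY_m+Z_n+1)$. I then bound $b^l\le b\max\{T_r,1\}<b\alpha^{r+1}$ and $b^d\le b\alpha^{n+1}\le b\alpha^{m+1}$. With $Y_m<\alpha^{m+1}$ and $Z_n<\alpha^{m+1}$, this gives
$$X_k<b\alpha^{r+1}\bigl(b\alpha^{2m+2}+\alpha^{m+1}+1\bigr)\le (b^2+b+1)\alpha^{2m+r+3}.$$
Combining with $X_k\ge\alpha^{k-2}$ and taking logarithms gives $k-r<2m+5+\log(b^2+b+1)/\log\alpha$. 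Strictness follows from the strict inequalities involved.
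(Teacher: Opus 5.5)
Parts a) and e) are correct and follow the paper's argument. One small point in e): your last inequality is not termwise, since $b\alpha^{r+1}\le\alpha^{2m+r+3}$ fails for $m=1$, $b\ge 34$. It still holds because $b\alpha^{m+r+2}+b\alpha^{r+1}\le b(\alpha^{-2}+\alpha^{-4})\alpha^{2m+r+3}<b\alpha^{2m+r+3}$.

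\textbf{Part d) has a genuine gap.} From $b^{d+l-2}<\alpha^{n+r+2}\le\alpha^{2k}$ you only get
\[
d+l<2+2k\,\frac{\log\alpha}{\log b}\le 2+2.55\,k .
\]
No absorbing of constants turns a leading coefficient of about $2.54$ into $1.28$. Estimating $d$ through $Z_n$ and $l$ through $T_r$ separately loses a factor $2$. The missing observation is that $b^{d+l}$ itself multiplies the leading block. Since $Y_m\ge1$,
\[
b^{d+l}\le b^{d+l}Y_m\le X_k<\alpha^{k+1},
\]
so $d+l<(k+1)\log\alpha/\log b\le(k+1)\log\alpha/\log 2<1.28(k+1)$. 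This is the paper's proof.

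\textbf{Parts b) and c) use a false inequality.} The bound $b^{d+l}Y_m\ge 4b^{l}$ fails in general. We have $b^{d+l}Y_m=b^l\cdot b^dY_m$, and $b^dY_m$ can be $2$ or $3$ (take $d=1$, $Y_m=P_1$, $b\in\{2,3\}$). For the solution $Q_2=2^{1+1}P_1+2^1P_1+P_0$ one even has $X_k=6<8=4b^l$. What is true is $b^{d+l}Y_m\ge b^{l+1}\ge 2b^l>2T_r$, hence $X_k>3T_r$.

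In c) this weaker bound still suffices. You already have $P_k<3P_{k-1}$ and $T_r\ge P_r$ for $r\ge1$, and $r=0$ is trivial because $k>m\ge1$.

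In b), however, the case $X=Q$, $T=P$, $r\ge1$ breaks: $Q_r<4P_r$ is no longer enough. You need the sharper bound $Q_r=2P_r+2P_{r-1}\le 3P_r$, which holds since $2P_{r-1}\le P_r$. This is exactly what the paper uses.
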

\begin{proof}
We prove a). From \eqref{eq:main}, we have 
$X_k \geq b^{d+l}Y_m \geq 4Y_m > Y_m.$ 
We distinguish three cases.
Assume first that $X=Y$. Since both sequences $(P_j)_{j\geq1}$ and
$(Q_j)_{j\geq1}$ are strictly increasing, from $X_k>Y_m=X_m$ we obtain $k>m$.
If $X=P$ and $Y=Q$, then $P_k=X_k>Y_m=Q_m$. Since $Q_m>P_m$ for all $m\geq1$, we get $P_k>P_m$. Since $m\geq1$ and the Pell sequence $(P_j)_{j\geq1}$ is strictly increasing, this implies $k>m$.
If $X=Q$ and $Y=P$, then $Q_k=X_k \geq 4Y_m=4P_m$. Since $Q_m<4P_m$ for all $m\geq1$, we get $Q_k>Q_m$. Since the Pell--Lucas sequence $(Q_j)_{j\geq1}$ is strictly increasing, it follows that $k>m$.

We prove b).
If $r=0$, then $k>r$ follows from a), since $k>m\geq1$. 
Assume now that $r\geq1$. From \eqref{eq:main}, we have $X_k>T_r$. 
If $X=T$, then $X_k>T_r=X_r$. Since the corresponding sequence is strictly increasing from the index $1$ on, and since $r\geq1$, it follows that $k>r$.
If $X=P$ and $T=Q$, then $P_k=X_k>T_r=Q_r$. Since $Q_r>P_r$ for $r\geq1$, we obtain $P_k>P_r$. Hence $k>r$. 
It remains to consider the case $X=Q$ and $T=P$. Since $r\geq1$ we have $l=l_b(P_r)$ and therefore $b^l>P_r$. Since $d\geq1$, $b\geq2$, and $Y_m\geq1$, we have $b^{d+l}Y_m\geq b^{l+1}>2P_r$. Using \eqref{eq:main}, we obtain $Q_k=X_k=b^{d+l}Y_m+b^lZ_n+P_r>2P_r+P_r=3P_r$. Using $Q_r=2P_r+2P_{r-1}$ and $2P_{r-1}\leq P_r$ for $r\geq1$, we get $Q_r\leq3P_r$. Thus 
$Q_k>3P_r\geq Q_r$, which implies $k>r$.

We prove c). If $X=P$ and $T=Q$, this implies $P_k=X_k>T_r=Q_r$. If $r=0$, then $Q_0=2=P_2$, and therefore $P_k>P_2$. Hence $k\geq3$, so $k-r\geq3$.
Let now $r\geq1$. Using the identity $Q_r=P_{r+1}+P_{r-1}$, we get $Q_r\geq P_{r+1}$. Thus $P_k>Q_r\geq P_{r+1}$, which implies $k>r+1$, so $k-r\geq2$.

If $T=P$ and $r=0$, then the exceptional case in Lemma \ref{lem:k_numerical_lower} does not occur, and hence $k\geq4$. Therefore $k-r=k>2$.
If $T=P$ and $r\geq1$, we use the inequality \eqref{eq:digit_bounds} to obtain
$P_k=b^{d+l}Y_m+b^lZ_n+P_r>P_r(b^dY_m+Z_n+1)\geq 3P_r$. Since the sequence $P$ is increasing we have $k>r$. If $k-r=1$, we would get 
$P_{r+1}=2P_r+P_{r-1}>3P_r$
implying $P_{r-1}>P_r$, a contradiction since the sequence $(P_k)_{k\geq0}$ is strictly increasing. Therefore $k-r\geq2$.

We prove d). Since $Y_m\geq1$, from \eqref{eq:main} we get $b^{d+l} \leq X_k$. By \eqref{eq:seq_bounds}, we have $X_k<\alpha^{k+1}$. Hence $b^{d+l}<\alpha^{k+1}$. Taking logarithms gives $d+l<(k+1)\frac{\log\alpha}{\log b}$. Since $b\geq2$ and $\log\alpha/\log2<1.28$, it follows that $d+l<1.28(k+1)$.

We prove e). By \eqref{eq:digits}, we have $b^d\leq b\max\{Z_n,1\}$ and $b^l\leq b\max\{T_r,1\}$. Hence, from \eqref{eq:main}, we get
$$
\begin{aligned}
X_k
&=b^{d+l}Y_m+b^lZ_n+T_r \\
&\leq b^2Y_m\max\{Z_n,1\}\max\{T_r,1\}
+b\max\{Z_n,1\}\max\{T_r,1\}
+\max\{T_r,1\}.
\end{aligned}
$$
Since $Y_m\geq1$ and $\max\{Z_n,1\}\geq1$, it follows that
$$X_k\leq (b^2+b+1)Y_m\max\{Z_n,1\}\max\{T_r,1\}.$$ Now, using \eqref{eq:seq_bounds_max} and the assumption $n\leq m$, we obtain $$
Y_m<\alpha^{m+1},
\;
\max\{Z_n,1\}<\alpha^{m+1},
\;
\max\{T_r,1\}<\alpha^{r+1}.$$
Therefore $X_k<(b^2+b+1)\alpha^{2m+r+3}$.
On the other hand, by \eqref{eq:seq_bounds}, we have $\alpha^{k-2}\leq X_k$. Combining the last two inequalities gives
$\alpha^{k-2}<(b^2+b+1)\alpha^{2m+r+3}$,
and taking logarithms we obtain
$ k-r<2m+5+\frac{\log(b^2+b+1)}{\log\alpha} $.
\end{proof}



\section{Proof of Theorem \ref{tm:main}}\label{sec4}

We consider all 16 equations under the hypothesis $n \le m$. The derivations follow a parallel structure: we first obtain a bound for $m$ from a first linear form and then use a second linear form to derive an absolute bound for $k$.

    \subsection{The First Linear Form}
Substituting \eqref{eq:binet_general} for $X_k$ and $Y_m$ into \eqref{eq:main}, we obtain 
$$\lambda_X\alpha^k+\varepsilon_X(k)
=
b^{d+l}\bigl(\lambda_Y\alpha^m+\varepsilon_Y(m)\bigr)
+b^lZ_n+T_r.$$
After rearranging,
\begin{equation*}
\lambda_X\alpha^k-b^{d+l}\lambda_Y\alpha^m
=
b^{d+l}\varepsilon_Y(m)+b^lZ_n+T_r-\varepsilon_X(k).
\end{equation*}
Dividing both sides by $b^{d+l}\lambda_Y\alpha^m$, we obtain
\begin{equation}\label{eq:first_rearr1}
\frac{\lambda_X}{\lambda_Y}\alpha^{k-m}b^{-(d+l)}-1
=
\frac{
b^{d+l}\varepsilon_Y(m)+b^lZ_n+T_r-\varepsilon_X(k)}
{b^{d+l}\lambda_Y\alpha^m}.
\end{equation}
We define
\begin{equation}\label{eq:Gamma1_def}
  \Gamma_1:=\frac{\lambda_X}{\lambda_Y}\alpha^{k-m}b^{-(d+l)}-1.
\end{equation}
From \eqref{eq:first_rearr1} and \eqref{eq:Gamma1_def} we have
\begin{equation}\label{eq:Gamma1_char}
  \Gamma_1=\frac{
b^{d+l}\varepsilon_Y(m)+b^lZ_n+T_r-\varepsilon_X(k)}
{b^{d+l}\lambda_Y\alpha^m}.
\end{equation}

Now we will show that for every solution of \eqref{eq:main} satisfying  $m\geq1$, we have
\begin{equation}\label{lem:Gamma1_bound}
    |\Gamma_1|<8\alpha^{-m}.
\end{equation}
Using \eqref{eq:Gamma1_char} and $|\varepsilon_U(j)|\leq\alpha^{-j}$, we get
\begin{equation*}
|\Gamma_1|
\leq
\frac{b^{d+l}|\varepsilon_Y(m)|}{b^{d+l}\lambda_Y\alpha^m}
+
\frac{b^lZ_n}{b^{d+l}\lambda_Y\alpha^m}
+
\frac{T_r}{b^{d+l}\lambda_Y\alpha^m}
+
\frac{|\varepsilon_X(k)|}{b^{d+l}\lambda_Y\alpha^m},
\end{equation*}
and since $\lambda_Y\geq 1/(2\sqrt2)$, we have 
\begin{equation}\label{eq:Gamma1_bound1}
|\Gamma_1|
\leq
\frac{1}{\lambda_Y}
\left(
\alpha^{-2m}
+
\frac{Z_n}{b^d\alpha^m}
+
\frac{T_r}{b^{d+l}\alpha^m}
+
\frac{\alpha^{-k-m}}{b^{d+l}}
\right).
\end{equation}
By \eqref{eq:digits}, we have $Z_n<b^d$ and $T_r<b^l$. Therefore, from \eqref{eq:Gamma1_bound1} it follows
\begin{equation}\label{eq:Gamma1_bound2}
|\Gamma_1|
<
2\sqrt2
\left(
\alpha^{-2m}
+
\alpha^{-m}
+
\frac{\alpha^{-m}}{b^d}
+
\frac{\alpha^{-k-m}}{b^{d+l}}
\right).
\end{equation}
By Lemma \ref{lem:general_bounds}, we have $k>m$, and since $d,l\geq1$ and $b\geq2$, from \eqref{eq:Gamma1_bound2}, we obtain
\begin{equation*}
|\Gamma_1|
<
2\sqrt2
\left(
1+1+\frac12+\frac14
\right)\alpha^{-m}
<
8\alpha^{-m}.
\end{equation*}

Now we will show that for every solution of \eqref{eq:main} satisfying  $m\geq1$, we have
    \begin{equation}\label{lem:Gamma1_nonzero}
        \Gamma_1 \neq 0.
    \end{equation}
Assume, to the contrary, that $\Gamma_1 = 0$.
    Then 
    \begin{equation*}
        \frac{\lambda_X}{\lambda_Y}\alpha^{k-m}=b^{d+l}.
    \end{equation*}
Taking norm $N$ on $\mathbb Q(\sqrt2)$ and using $N(\alpha)=-1$, we obtain
    \begin{equation}\label{eq:nonzero}
        N\left(\frac{\lambda_X}{\lambda_Y}\right)(-1)^{k-m}=b^{2(d+l)}.
    \end{equation}
However, 
\begin{equation*}
        \frac{\lambda_X}{\lambda_Y} \in \{1,c,c^{-1}\},
    \end{equation*}
and therefore 
\begin{equation*}
        N\left(\frac{\lambda_X}{\lambda_Y}\right)(-1)^{k-m} \in \{\pm1,\pm1/8,\pm8\}.
    \end{equation*}
On the other hand, since $b\geq2$ and $d+l\geq2$, the right-hand side of \eqref{eq:nonzero} satisfies
$b^{2(d+l)} \geq 2^4.$
This is impossible. Therefore $\Gamma_1\neq0$.

\subsubsection{Matveev Parameters for $\Gamma_1$}
We apply Lemma \ref{tm:BMS} to 
$$
\Gamma_1=\frac{\lambda_X}{\lambda_Y}\alpha^{k-m}b^{-(d+l)}-1.
$$
We define
$$
\gamma_1=\alpha,
\qquad
\gamma_2=b,
\qquad
\gamma_3=\frac{\lambda_X}{\lambda_Y}
$$
and 
$$
b_1=k-m,
\qquad
b_2=-(d+l),
\qquad
b_3=1.
$$
$\gamma_1, \gamma_2, \gamma_3 \in \mathbb Q(\sqrt2)$, so $D=2$.
Since
$$
h(\alpha)=\frac12\log\alpha,
\qquad
h(b)=\log b,
$$
we may take 
$$A_1=\log\alpha,
\qquad
A_2=2\log b.$$
Since $\frac{\lambda_X}{\lambda_Y} \in \{1,c,c^{-1}\}$, we may take $A_3=\log 8$.
By Lemma \ref{lem:general_bounds}, we have $d+l<1.28(k+1)$ and for $k\geq2$, this gives $d+l<3k$.
We have
$$\max\{|k-m|,|-(d+l)|, |1| \}  < \max\{k,3k,1 \}=3k,$$
thus we may take $B=3k$.
Because of \eqref{lem:Gamma1_nonzero} and by Lemma \ref{tm:BMS} we have
\begin{equation}\label{Gamma1_bound1}
    \log|\Gamma_1|
>
-C_1(1+\log(3k))(\log\alpha)(2\log b)(\log8),
\end{equation}
where
\begin{equation*}
C_1=1.4\cdot30^6\cdot3^{4.5}\cdot2^2(1+\log2).
\end{equation*}
On the other hand, from inequality \eqref{lem:Gamma1_bound}, we have
\begin{equation}\label{Gamma1_bound2}
\log|\Gamma_1|<\log8-m\log\alpha.
\end{equation}
Combining \eqref{Gamma1_bound1} and \eqref{Gamma1_bound2}, we get
\begin{equation*}
m\log\alpha-\log8
<
C_1(1+\log(3k))(\log\alpha)(2\log b)(\log8) 
\end{equation*}
which gives
\begin{equation}\label{Gamma1_bound_m}
m
<4.04\cdot10^{12}(\log b)(1+\log(3k)).
\end{equation}

\subsection{The Second Linear Form $\Gamma_2$}

Rearranging the initial equation \eqref{eq:main}, after using \eqref{eq:binet_general} for $X_k$ and $T_r$, factoring out $\lambda_X\alpha^k$, and dividing the entire equation by the corresponding dominant factor yields our second linear form
\begin{equation*}
    \Gamma_{2} :=1- \alpha^{-k} b^l
    \frac{b^d Y_m+Z_n}{\lambda_X(1-\lambda_T\lambda_X^{-1}\alpha^{r-k})} 
    = \frac{\varepsilon_T(r)-\varepsilon_X(k)}{\lambda_X\alpha^k(1-\lambda_T\lambda_X^{-1}\alpha^{r-k})}.
\end{equation*}
Note that we have $1-\lambda_T\lambda_X^{-1}\alpha^{r-k} > 0$ for each choice of $\lambda_T$ and $\lambda_X$. This follows from Lemma \ref{lem:general_bounds}, which ensures that when $\lambda_T\lambda_X^{-1} > 1$, specifically, when $X=P$ and $T=Q$, it holds $k-r \ge 2$, maintaining $\lambda_T\lambda_X^{-1}\alpha^{r-k} < 1$. For all other choices, the coefficient ratio is at most $1$ and $k>r$, again ensuring that  $\lambda_T\lambda_X^{-1}\alpha^{r-k}<1$.

Now we will show that 
\begin{equation*}
    |\Gamma_{2}| \leq \frac{3}{\alpha^k}.
\end{equation*}
To prove this inequality, notice that since $|\varepsilon_U(j)|\leq \alpha^{-j}$, we have $|\Gamma_2|\leq \alpha^{-k} E$, where  
\begin{equation*}
    0<E=\frac{\lambda_X^{-1}\alpha^{-r}(\alpha^{-(k-r)}+1)}{1-\lambda_T\lambda_X^{-1}\alpha^{-(k-r)}}.
\end{equation*} 
Hence, it suffices to show that $E\leq 3$.
First, consider the case when $X=Q$. Then $\lambda_X=1$ and $\lambda_T\lambda_X^{-1}\leq 1$. Since from Lemma \ref{lem:general_bounds} it holds $k-r\geq 1$, we have $E\leq \alpha <3$. Second, consider the case when $X=T=P$. Then $\lambda_X=\lambda_T=c$. If $r\geq 1$ we have, by Lemma \ref{lem:general_bounds}, $k-r\geq 2$, then
$$ E\leq \frac{2\sqrt{2}}{\alpha}\frac{(\alpha^{-2}+1)}{1-\alpha^{-2}}<3.$$
If $r=0$ then $k\geq 4$ by Lemma \ref{lem:k_numerical_lower} and we have $E\leq3$. 
Last, if $X=P$ and $T=Q$, then $\lambda_X=c$ and $\lambda_T\lambda_X^{-1}=2\sqrt{2}$, and $k-r\geq 2$. If $r\geq 1$, we get $E<3$. If $r=0$, by Lemma \ref{lem:k_numerical_lower} $k\geq 5$ hence again $E<3$. 

Also, $\Gamma_2\neq 0$. If $\Gamma_{2}=0$, then $\varepsilon_T(r)=\varepsilon_X(k)$. If $T=X$, we would have $\beta^k=\beta^r$, implying $k=r$, which cannot hold.  If $T\neq X$, then $\beta^{k-r}=-c$ or $\beta^{k-r}=-1/c$. The norm of the left-hand side is equal to $\pm 1$, while the norms of the right-hand side are $-1/8$ and $-8$, respectively, implying a contradiction in both cases.

\subsubsection{Matveev Parameters for $\Gamma_2$}
We apply Lemma \ref{tm:BMS} again, this time  $s=3$. The degree remains $D=2$. Furthermore,
\begin{itemize}
    \item $\gamma_1 = \alpha, \quad b_1 = -k \quad \implies A_1 = \log \alpha,$
    \item $\gamma_2 = b, \quad b_2 = l \quad \implies A_2 = 2\log b,$
    \item $\gamma_3 = \lambda_X^{-1}\frac{b^d Y_m + Z_n}{1-\lambda_T\lambda_X^{-1}\alpha^{r-k}}, \quad b_3 = 1.$

\end{itemize}

 By Lemma \ref{lem:general_bounds}, we have $l<d+l<3k$, therefore we can take $B=3k$.

\noindent\textbf{Logarithmic Height $h(\gamma_3)$.}
We have
$$
h(\gamma_{3})
\leq
h(b^d Y_m+Z_n)+h(\lambda_X(1-\lambda_T\lambda_X^{-1}\alpha^{r-k})).
$$
For the numerator, we use \eqref{eq:digits} and \eqref{eq:seq_bounds_max}. Since $n\leq m$, we have
$
b^d\leq b\max\{Z_n,1\},$
$Y_m<\alpha^{m+1},$
$\max\{Z_n,1\}<\alpha^{m+1}.$
This gives:
\begin{align*}
b^d Y_m+Z_n &\leq b \max\{Z_n,1\} Y_m + \max\{Z_n,1\} \leq (b+1)\max\{Z_n,1\}Y_m \\&< 2b\alpha^{m+1}\alpha^{m+1} = 2b\alpha^{2m+2}.
\end{align*}
Taking the logarithm of both sides, we obtain:
\begin{align*}
h(b^d Y_m+Z_n) &= \log(b^d Y_m+Z_n) < \log(2b) + (2m+2)\log\alpha \\&= 2m\log\alpha + \log b + \log 2 + 2\log\alpha< 2m\log \alpha + \log b + 3.
\end{align*}

For the denominator, 
$$h(\lambda_X(1-\lambda_T\lambda_X^{-1}\alpha^{r-k}))\leq h(\lambda_X)+h(\lambda_T)+(k-r)h(\alpha)+\log 2\leq \frac{k-r}{2}\log \alpha+3.$$
From Lemma \ref{lem:general_bounds}, by using estimate $b^2+b+1\leq 2b^2$, since $b\geq 2$, we get  
$$h(\lambda_X(1-\lambda_T\lambda_X^{-1}\alpha^{r-k}))\leq m\log \alpha+\log b+6. $$
Hence, 
$$h(\gamma_3)\leq 3m\log \alpha+2\log b+9.$$
Notice that $\gamma_3>0$ and $|\log(\gamma_3)|\leq 2h(\gamma_3)$, hence 
$$A_3:=6m\log\alpha+4\log b+18.$$

Applying Matveev's theorem and using $|\Gamma_{2}|\leq 3/\alpha^k$, we obtain 
\begin{equation}\label{eq:k_pre_bound_1}
k < 4.15  \cdot 10^{25} \cdot (\log b)^2 \cdot (1+\log (3k))^2.
\end{equation}

As noted after Proposition \ref{prop:kje2}, the trivial solution for $k=2$ already satisfies Theorem \ref{tm:main}. Therefore, we can assume $k \geq 3$ and we have
$$
1+\log(3k) < 3\log k.
$$
Substituting this into \eqref{eq:k_pre_bound_1}, we get
$$
\frac{k}{\log^2 k}
<
3.74\cdot10^{26}(\log b)^2.
$$
Let 
$$
H=3.74\cdot10^{26}\log^2 b.
$$
Applying Lemma \ref{lemma:supporting} with $\ell =2$, we obtain
$
k<2^2H(\log H)^2.
$
Therefore
$$
k
<
4\cdot 3.74\cdot10^{26}(\log b)^2
\left(\log(3.74\cdot10^{26})+2\log\log b\right)^2.
$$
Since
$$
\log(3.74\cdot10^{26})<61.2
$$
and 
$$
(61.2+2\log\log b)^2<73^2\log b
\qquad (b\geq2),
$$
we get 
\begin{equation}\label{eq:final_k_1}
k<8\cdot 10^{30}\cdot \log^3 b.
\end{equation}
 By the assumption $n\leq m$ and Lemma~\ref{lem:general_bounds} we have $n\leq m<k$ and $r<k$. Hence the bound \eqref{eq:final_k_1} implies that all the indices are bounded, and therefore there are only finitely many solutions. This completes the proof of Theorem~\ref{tm:main}.

\section{Search for solutions in the range $2\leq b\leq 15$}\label{sec5}

In this section, we transition from the theoretical bounds established in Theorem \ref{tm:main} to explicit numerical results. Before applying the reduction method to completely resolve equation \eqref{eq:main} for bases $b$ such that $2\le b\le 15$ under the assumption $n\leq m$, we briefly present the findings of a computational search  which also includes solutions with $n>m$.

\subsection{Initial Computational Search and Examples}\label{computational_search}

We performed a simple computational search within the range $2\leq b \leq 15$ and indices $m, n, r \le 300$, without any restriction on the relation between $m$ and $n$. The search revealed that there are no solutions to equation \eqref{eq:main} for $b \in \{5, 7, 10, 11\}$ within this parameter range and, under the condition $n \leq m$, yielded the solutions described in Theorem \ref{thm2:main}.

Each of the 16 equations \eqref{eq:main} has at least one solution within these parameters. This remains true even if we require that $P_0 = 0$ is not used in the concatenation; for each of the 16 equations, there exists at least one such solution in some base $2 \le b \le 15$. 

To demonstrate this, we provide one solution for each of the 16 equations:
\begin{align*}
P_{8} &= 6^{1+2} P_1 + 6^2 P_3 + P_4, & P_{8} &= 14^{1+1} P_2 + 14^1 P_1 + Q_1, \\
P_{7} &= 12^{1+1} P_1 + 12^1 Q_0 + P_1, & P_{10} &= 4^{4+1} P_2 + 4^1 Q_5 + Q_1, 
\end{align*}
\begin{align*}
P_{8} &= 13^{1+1} Q_1 + 13^1 P_3 + P_3, & P_{8} &= 14^{1+1} Q_1 + 14^1 P_1 + Q_1, \\
P_{10} &= 4^{4+1} Q_1 + 4^1 Q_5 + P_2, & P_{10} &= 4^{4+1} Q_1 + 4^1 Q_5 + Q_1, 
\end{align*}
\begin{align*}
Q_{3} &= 3^{1+1} P_1 + 3^1 P_1 + P_2, & Q_{8} &= 15^{1+1} P_3 + 15^1 P_1 + Q_3, \\
Q_{7} &= 14^{1+1} P_2 + 14^1 Q_2 + P_2, & Q_{5} &= 8^{1+1} P_1 + 8^1 Q_1 + Q_1, 
\end{align*}
\begin{align*}
Q_{10} &= 4^{1+4} Q_2 + 4^4 P_2 + P_6, & Q_{7} &= 2^{1+4} Q_3 + 2^4 P_1 + Q_3, \\
Q_{10} &= 4^{1+4} Q_2 + 4^4 Q_1 + P_6, & Q_{13} &= 9^{2+2} Q_3 + 9^2 Q_4 + Q_4.
\end{align*}


\begin{remark}
    It is interesting to note that within this parameter range, under the condition $n\leq m$, the equations 
    $$
    P_k = b^{d+l}P_m + b^lQ_n + Q_r \qquad \text{and} \qquad P_k = b^{d+l}Q_m + b^lQ_n + Q_r
    $$
    yield no solutions. Expanding our search to an enlarged base range $2\leq b\leq 1000$, we found that for $b=33$ it holds that
    $$P_{10} = 33^{1+1} P_2 + 33^1 Q_2 + Q_1, $$
    however, the equation $P_k = b^{d+l}Q_m + b^lQ_n + Q_r$ still yields no solutions satisfying $n\leq m$ even within this extended range.
\end{remark}

For each fixed base $b$ with $2 \le b \le 15$, Theorem~\ref{tm:main} provides an explicit, yet computationally infeasible, upper bound on $k$. Since these bounds are far too large to permit a direct search, we must reduce them to a manageable size, ideally matching the range of indices already covered in this subsection. To this end, we apply the reduction method of Lemma~\ref{lemma:reduction}.

\subsection{ The First Reduction}

Let
$$
z_1=(k-m)\log\alpha-(d+l)\log b+\log\left(\dfrac{\lambda_X}{\lambda_Y}\right).  
$$
By the definition of $\Gamma_1$ in \eqref{eq:Gamma1_def}, we have
$\Gamma_1=e^{z_1}-1.$ 
For $m\ge 4$, inequality~\eqref{lem:Gamma1_bound} gives
$$
|\Gamma_1|=|e^{z_1}-1|<8\cdot\alpha^{-m}<\dfrac{1}{2}.
$$
Since $\Gamma_1\neq0$ and $|\log(1+x)|<2|x|$ for $|x|<1/2$, we obtain
$$0<|\log(1+\Gamma_1)|=|z_1|
<2|\Gamma_1|<16\alpha^{-m}.$$
Dividing by $\log b$, we obtain
\begin{equation}\label{eqn:baker_D}
0<\left|(k-m)\dfrac{\log\alpha}{\log b}-(d+l)+\dfrac{\log(\lambda_X/\lambda_Y)}{\log b}\right|<\dfrac{16}{\log b}\cdot\alpha^{-m}.
\end{equation}
This is of the form \eqref{eqn:baker_d} treated in Lemma~\ref{lemma:reduction}, with
$$
\tau=\dfrac{\log\alpha}{\log b},\qquad \mu=\dfrac{\log(\lambda_X/\lambda_Y)}{\log b},\qquad A=\dfrac{16}{\log b},\qquad B=\alpha,\qquad w=m.
$$
For every integer $b\geq2$, the number
$\log\alpha/\log b$ is irrational. Indeed, if the number
$\log\alpha/\log b=p/q$ were rational with positive integers $p,q$,
then $\alpha^q=b^p$. Taking norms in $\mathbb Q(\sqrt2)$ gives
$(-1)^q=b^{2p}$, which is impossible. 

Here the integer $k-m$ plays the role of the variable denoted $m$
in Lemma~\ref{lemma:reduction}. Since $k>m$ by
Lemma~\ref{lem:general_bounds}~a), $k-m$ is a positive integer, and
to apply Lemma~\ref{lemma:reduction} we need an upper bound on it. As $m\ge 1$, we have $k-m<k$, and combining this with the bound on $k$, Theorem~\ref{tm:main} yields
\begin{equation}\label{eq:M-bound}
k-m<k<8\cdot 10^{30}\cdot\log^3 b.
\end{equation}

For each fixed base $b$ with $2\le b\le 15$, we may take $M=\left\lceil 8\cdot10^{30}\log^3 b\right\rceil$, together with the parameters $\tau$, $\mu$, $A$, and $B$ fixed above, to apply Lemma~\ref{lemma:reduction} and reduce this bound to a computationally feasible range. If $X=Y$, then $\mu=0$. In this case,
$\varepsilon=-M\|\tau q\|<0$
for every convergent $p/q$ to $\tau$, since $\tau$ is irrational. Hence
Lemma~\ref{lemma:reduction} cannot be applied, and we treat this homogeneous
case separately using Lemma~\ref{lemma:legendre}.

We now apply the reduction method of Lemma~\ref{lemma:reduction} to the inhomogeneous
inequality~\eqref{eqn:baker_D},  for each base $b$ with $2 \le b \le 15$. For every convergent $p/q$ of the continued fraction expansion of $\tau = \log\alpha/\log b$ with $q > 6M$, we compute
$\varepsilon = \|\mu q\| - M\|\tau q\|$ and retain the first convergent for which
$\varepsilon > 0$. Substituting the corresponding values of $q$ and $\varepsilon$ into
Lemma~\ref{lemma:reduction} then yields, for each $b$, a much smaller upper bound for $m$.  Taking the maximum over all bases $2 \le b \le 15$, which is attained at $b = 15$, we obtain the bound $m \le 91$ for $X\ne Y$ and $2\le b\le15$.

It remains to treat the homogeneous case $\mu = 0$, corresponding to $X = Y$, which was set aside above since inequality~\eqref{eqn:baker_D} then reduces to a purely homogeneous linear form and cannot be handled directly by Lemma~\ref{lemma:reduction}. For this case, we instead apply Lemma~\ref{lemma:legendre}. Then from \eqref{eqn:baker_D} with $b\ge 2$, we have
\begin{align}\label{le}
0<\left|\dfrac{\log\alpha}{\log b}-\dfrac{d+l}{k-m}\right|<\dfrac{16}{\log b}\cdot \dfrac{1}{(k-m)\cdot\alpha^m}<\dfrac{23.0832}{(k-m)\cdot\alpha^m}.
\end{align}
Since $b\le15$, Theorem~\ref{tm:main} gives
$$
k
<8\cdot10^{30}\log^3 b
\le8\cdot10^{30}\log^3 15
<1589\cdot10^{29}.
$$
Thus, we may take
$
M=1589\cdot10^{29},
$
so that $k-m<k<M$.
Assume now that $m > 100$. Then it can be seen that
$$
\frac{\alpha^m}{2(23.0832)} > 4.104\cdot 10^{36} > 1589\cdot 10^{29} > k > k-m,
$$
and then from \eqref{le}, we get
$$
\left|\frac{\log\alpha}{\log b}-\frac{d+l}{k-m}\right| < \frac{23.0832}{(k-m)\cdot\alpha^m} < \frac{1}{2(k-m)^2}.
$$
From Lemma~\ref{lemma:legendre}, we conclude that the rational number $\dfrac{d+l}{k-m}$ is a convergent of the continued fraction expansion of $\kappa := \dfrac{\log\alpha}{\log b}$; that is, $\dfrac{d+l}{k-m} = p_t/q_t$ for some $t$. For each $b$ with $2\le b\le15$, we use Mathematica to find the first
convergent $p_N/q_N$ satisfying $q_N>M$, and set
$a(M):=\max\{a_i:i=0,\dots,N\}$. Lemma~\ref{lemma:legendre} and \eqref{le} then give
$$
\frac{1}{(a(M)+2)\cdot (k-m)^2} \le \left| \frac{\log\alpha}{\log b} - \frac{d+l}{k-m} \right| < \frac{23.0832}{(k-m)\cdot\alpha^m}.
$$
Multiplying by $k-m >0$, this yields
$$
\frac{1}{(a(M)+2)\cdot (k-m)} \le \left| (k-m)\frac{\log\alpha}{\log b} - (d+l) \right| < \frac{23.0832}{\alpha^m}
$$
which leads to
\begin{align}\label{28}
m<\dfrac{\log\left(23.0832\cdot (a(M)+2)\cdot 1589\cdot 10^{29} \right)}{\log\alpha }.
\end{align}

 After computing such $q_N$ and $a(M)$ for each $b$ with $2\leq b\leq 15$, we obtain that $a(M)\leq580$, with the maximum attained at $b=12$. Substituting this bound into \eqref{28}, we obtain
$m < 94.9103$ for all $b$ with $2 \le b \le 15$. This contradicts the assumption that $m > 100$. Combining the two cases, we obtain $m\le100$ for all $2\le b\le15$. 
We return
to the estimate on $k-r$ obtained in Lemma \ref{lem:general_bounds} e), namely
$$
k - r < 2m + 5 + \frac{\log(b^2+b+1)}{\log \alpha}.
$$
Part b) of Lemma \ref{lem:general_bounds} guarantees that $k > r$ for every solution, so
$k-r$ is indeed a well-defined positive integer, to which the above inequality
applies. Since the right-hand side is increasing in both $m$ and $b$,
substituting the bounds $m \le 100$ and $b \le 15$  yields,
uniformly over all bases under consideration,
$$
k - r < 2(100) + 5 + \frac{\log(15^2+15+1)}{\log \alpha}
       = 205 + \frac{\log 241}{\log \alpha} < 205 + 6.224= 211.224,
$$
so that $1\leq k-r\leq211$. Moreover, by
Lemma~\ref{lem:general_bounds} c), if $X=P$, then $k-r\geq2$.
Hence,
\begin{equation}\label{k-r}
\begin{cases}
2\leq k-r\leq211, & \text{if } X=P,\\
1\leq k-r\leq211, & \text{if } X=Q.
\end{cases}
\end{equation}

\subsection{ The Second Reduction }

Now, put
$$
z_2 = l \log b - k \log \alpha
      + \log\!\left(\frac{b^{d}Y_m + Z_n}
      {\lambda_X\bigl(1 - \lambda_T \lambda_X^{-1} \alpha^{r-k}\bigr)}\right)
$$
and we have $\Gamma_2 = 1 - e^{z_2}$.

The case $k=2$ was determined in Proposition~\ref{prop:kje2}, so we may assume
$k\ge3$. By the inequality  $|\Gamma_2|\le 3\alpha^{-k}$, we have
$$
|\Gamma_2|
\le 3\alpha^{-k}
\le 3\alpha^{-3}
<\frac12.
$$

from which it follows that
$
|z_2| < 6 \cdot \alpha^{-k}.
$
Dividing by $\log \alpha$, we obtain
\begin{equation}\label{eqn:baker_D1}
0 < \left| l \, \frac{\log b}{\log \alpha} - k
    + \frac{1}{\log \alpha}
    \log\!\left(\frac{b^{d}Y_m + Z_n}
    {\lambda_X\bigl(1 - \lambda_T \lambda_X^{-1}\alpha^{r-k}\bigr)}\right)
    \right|
< \frac{6}{\log \alpha} \cdot \alpha^{-k}.
\end{equation}
This is again of the form \eqref{eqn:baker_d} treated in Lemma~\ref{lemma:reduction}, with
\begin{align}\label{mu}
\tau = \frac{\log b}{\log \alpha}, \quad
\mu = \frac{1}{\log \alpha}
      \log\!\left(\frac{b^{d}Y_m + Z_n}
      {\lambda_X\bigl(1 - \lambda_T \lambda_X^{-1}\alpha^{r-k}\bigr)}\right),
\quad
A = \frac{6}{\log \alpha}, \; B = \alpha, \; w = k,
\end{align}
where the integer $l$ plays the role of the variable denoted $m$ in
Lemma~\ref{lemma:reduction}. 
Since $l < d + l < 1.28(k+1)$ by Lemma~\ref{lem:general_bounds} d), and since $k$
is bounded by Theorem~\ref{tm:main}, we obtain
$$
l
<1.28\left(8\cdot10^{30}\log^3 15+1\right)
<2034\cdot10^{29}=:M.
$$

Since the parameters in \eqref{eqn:baker_D1} depend on the specific choice of
$X,Y,Z,T$ in \eqref{eq:main}, we perform our analysis separately for each of
the 16 possible equations, and for brevity denote each equation below by its
corresponding ordered quadruple $(X,Y,Z,T)$. For each of them we implemented
the algorithm of Lemma~\ref{lemma:reduction} in Wolfram {Mathematica} and
carried out the corresponding computations for all parameter tuples
$(b,m,n,k-r)$ satisfying
$$
2 \le b \le 15,\qquad 1\le m\le 100,\qquad 0 \le n \le m,\qquad 1 \le k-r \le 211,
$$
where, by Lemma~\ref{lem:general_bounds} c), the range of $k-r$ is further
restricted to $2\le k-r\le211$ whenever $X=P$. For each parameter tuple we run through the successive convergents $p/q$ to
$\tau$ with $q>6M$ until a convergent with $\varepsilon>0$ is reached. It
suffices to consider the first $150$ convergents to $\tau$. All computations were carried out with a working
precision of $250$ significant digits.

The parameter tuples for which $\varepsilon\le0$ holds for every convergent are
exactly those listed in Cases 1--7 below; for all the remaining ones the
resulting upper bounds on $k$ are reported in Table~\ref{tab:kk}.

\begin{table}[htbp]
\centering
\caption{Upper bounds on $k$ for the sixteen choices of $(X,Y,Z,T)$.}
\label{tab:kk}
\begin{tabular}{|c|c||c|c|}
\hline
$(X,Y,Z,T)$ & $k\le$ & $(X,Y,Z,T)$ & $k\le$ \\
\hline\hline
$(P,P,P,P)$ & $207$ & $(Q,P,P,P)$ & $216$ \\ \hline
$(P,P,P,Q)$ & $207$ & $(Q,P,P,Q)$ & $220$ \\ \hline
$(P,P,Q,P)$ & $113$ & $(Q,P,Q,P)$ & $112$ \\ \hline
$(P,P,Q,Q)$ & $115$ & $(Q,P,Q,Q)$ & $110$ \\ \hline
$(P,Q,P,P)$ & $202$ & $(Q,Q,P,P)$ & $215$ \\ \hline
$(P,Q,P,Q)$ & $112$ & $(Q,Q,P,Q)$ & $220$ \\ \hline
$(P,Q,Q,P)$ & $110$ & $(Q,Q,Q,P)$ & $116$ \\ \hline
$(P,Q,Q,Q)$ & $112$ & $(Q,Q,Q,Q)$ & $113$ \\
\hline
\end{tabular}
\end{table}

Thus $k\le220$ for every parameter tuple to which Lemma~\ref{lemma:reduction}
applies. It remains to treat the tuples listed in Cases 1--7. For each of them
the value of $\mu$ simplifies to an expression of the form
$u+v\log b/\log\alpha$ with integers $u,v$. As shown in \eqref{eq:eps_neg}
below, this implies that $\varepsilon\le0$ for every convergent, so that
Lemma~\ref{lemma:reduction} cannot be applied there and must be replaced by
Lemma~\ref{lemma:legendre}.

\medskip

{\bf Case 1.} {\it The case when $(X,Y,Z,T) = (P,P,P,P)$.}

\medskip 

In this case, if $n=0$, $m=2i$ with $i=1,\ldots,50,$  $2\le b\le 15$ and $k-r=2m$, we have $\varepsilon<0$. For these parameters,
$$
\mu = \frac{1}{\log \alpha}
      \log\!\left(\frac{b^{d}P_m + P_n}
      {c\bigl(1 - \alpha^{r-k}\bigr)}\right)=\frac{1}{\log \alpha}
      \log\!\left(\frac{bP_m}
      {c\bigl(1 - \alpha^{r-k}\bigr)}\right).
$$
Since $m$ is even, we have $\beta^m = \alpha^{-m}$, so Binet's formula for the Pell sequence becomes
$$
P_m = \frac{\alpha^m - \beta^m}{2\sqrt{2}} = \frac{\alpha^m - \alpha^{-m}}{2\sqrt{2}}.
$$
Let $x = \alpha^m$. The expression inside the logarithm becomes
$$
\frac{b\,P_m}{c\left(1-\alpha^{-(k-r)}\right)}
= \frac{b\cdot \dfrac{x-\dfrac{1}{x}}{2\sqrt{2}}}
       {\dfrac{1}{2\sqrt{2}}\left(1-\dfrac{1}{x^{2}}\right)}
= \frac{b\left(x-\dfrac{1}{x}\right)}{1-\dfrac{1}{x^{2}}}
= \frac{b\cdot \dfrac{x^{2}-1}{x}}{\dfrac{x^{2}-1}{x^{2}}}
= b\,x.
$$
Substituting back $x=\alpha^m$, we obtain
$$
\frac{b\,P_m}{c\left(1-\alpha^{-(k-r)}\right)} = b\,\alpha^{m},
$$
and therefore 
$$
\mu = m + \frac{\log b}{\log \alpha}.
$$
The remaining quadruples are
\[
\begin{aligned}
(b,m,n,k-r)\in\{&
(2,1,0,4),(2,1,1,8),(2,4,4,16),\\
&(3,1,1,8),(3,5,5,20),(5,2,2,8),\\
&(6,2,0,8),(6,4,0,4),(9,5,5,20),\\
&(11,1,1,8),(12,1,0,8),(13,3,3,12),\\
&(14,3,0,12),(14,5,2,16),(14,7,4,20)\}.
\end{aligned}
\]
For these quadruples, direct simplification gives 
$$
\mu = u + v \frac{\log b}{\log \alpha}
$$
for
$
(u,v)\in
\{(2,0),(2,2),(4,-2),(4,-1),(4,0),
(6,0),(8,-1),(8,0),(10,0)\}.
$

\medskip

{\bf Case 2.} {\it The case when $(X,Y,Z,T) = (P,P,Q,P)$.}

\medskip 

In this case, we obtain $\varepsilon<0$ for
$$
\begin{aligned}
(b,m,n,k-r)\in\{&
(2,1,0,8),(2,1,1,8),(2,4,2,16),\\
&(5,2,0,8),(5,2,1,8),\\
&(10,1,0,8),(10,1,1,8),\\
&(14,5,0,16),(14,5,1,16)\}.
\end{aligned}
$$ For these quadruples, direct simplification gives
\begin{equation}
\mu = u + v \frac{\log b}{\log \alpha},
\qquad
(u,v)\in\{(4,-1),(8,-2),(4,0),(8,0)\}.
\end{equation}

\medskip

{\bf Case 3.} {\it The case when  $(X,Y,Z,T) = (P,Q,P,P)$.}

\medskip

If $n=0$, $m=1$, $2\le b\le 15$, and $k-r=4$, we have $\varepsilon<0$.  For these parameters
$$
\mu = \frac{1}{\log \alpha}
      \log\!\left(\frac{b^{d}Q_m + P_n}
      {c(1 - \alpha^{r-k})}\right)=\frac{1}{\log \alpha}
      \log\!\left(\frac{bQ_1}
      {c(1 - \alpha^{-4})}\right).
$$
Since $c(1 - \alpha^{-4})=2/\alpha^2$ and $Q_1=2$, it follows that
\begin{align}
\mu =\dfrac{\log b}{\log\alpha}+2.
\end{align}
In the remaining cases, we obtain $\varepsilon<0$ for 
$$
\begin{aligned}
(b,m,n,k-r)\in\{&
(2,2,0,8),\ (3,2,0,4),\ (5,3,0,12),\ (6,1,0,8),\\
&(7,3,0,4),\ (12,4,0,16),\ (12,6,2,20),\ (13,7,0,28)\}.
\end{aligned}
$$
For these quadruples, direct simplification of $\mu$ gives
\begin{align}\label{1mu}
\mu = u + v \frac{\log b}{\log \alpha},
\qquad
(u,v)\in\{(4,0),(2,2),(6,0),(8,0),(10,0),(14,-1)\}.
\end{align}

\medskip

{\bf Case 4.} {\it The case when  $(X,Y,Z,T) = (P,Q,Q,P)$.}

\medskip 

In this case, only the quadruples
$$
(b, m, n, k-r) \in \{(4, 2, 2, 16), (5, 1, 0, 8), (5, 1, 1, 8), (12, 6, 0, 20), (12, 6, 1, 20)\}
$$
lead to $\varepsilon < 0$.
For these quadruples, direct simplification of $\mu$ gives
$$
\mu=u+v\frac{\log b}{\log\alpha},
\qquad
(u,v)\in\{(8,-1),(4,0),(10,0)\}.
$$

\medskip

{\bf Case 5.} {\it The case when  $(X,Y,Z,T) = (Q, P, P, Q)$.}

\medskip 
For  $m=2, n=0, k-r=2$ and  $2\le b\le 15,$ we get $\varepsilon < 0.$ The corresponding $\mu$ is $1+\log b/\log\alpha.$ Furthermore, for the quadruples
$$
\begin{aligned}
(b,m,n,k-r)\in\{&
(2,1,0,2),\ (5,6,0,6),\ (6,2,2,6),\ (6,4,0,2),\\
&(7,2,0,6),\ (13,1,1,6),\ (13,14,0,14),\ (14,1,0,6)\}
\end{aligned}
$$
we also get $\varepsilon < 0$. For these quadruples, direct simplification of $\mu$ gives
$$
\mu=u+v\frac{\log b}{\log\alpha},
\qquad
(u,v)\in\{(1,0),(3,2),(3,0),(1,2),(7,3)\}.
$$

\medskip

{\bf Case 6.} {\it The case when  $(X,Y,Z,T) = (Q,Q,P,Q)$.}

\medskip 

In this case, if $n=0$, $2\le b\le 15,$  $m=2i+1$ with $i=0,\ldots,49$  and $k-r=2m$, we have $\varepsilon<0$.  For these parameters,
$$
\mu = \frac{1}{\log \alpha}
      \log\!\left(\frac{b^{d}Q_m + P_n}
      {1 - \alpha^{r-k}}\right)=\frac{1}{\log \alpha}
      \log\!\left(\frac{bQ_m}
      {1 - \alpha^{r-k}}\right).
$$
Since $m = 2i+1$ is odd, we have $\beta^{m} = (-\alpha^{-1})^{m} = -\alpha^{-m}$, hence
$
Q_m = \alpha^{m} - \alpha^{-m}.$ With $k-r= 2m$, we have
$$
1 - \alpha^{-(k-r)} = 1 - \alpha^{-2m} = \alpha^{-m}\left(\alpha^{m} - \alpha^{-m}\right) = \alpha^{-m} Q_m.
$$
Therefore
$$
\frac{b\,Q_m}{1-\alpha^{-(k-r)}} = \frac{b\,Q_m}{\alpha^{-m}Q_m} = b\,\alpha^{m},
$$
which leads to
\begin{align}
\mu = \frac{1}{\log \alpha}\log\left(b\,\alpha^{m}\right) = \frac{\log b}{\log \alpha} + m.
\end{align}
Also, in case of the quadruples 
$$
(b, m, n, k-r) \in \{(3, 2, 0, 2), (7, 1, 0, 6), (7, 3, 0, 2), (14, 4, 2,14)\}
$$
we have $\varepsilon<0$, and direct simplification gives
\begin{align}
\mu = u + v \frac{\log b}{\log \alpha},\quad 
(u,v)\in\{(1,2),(3,0),(7,0)\}.
\end{align}

\medskip

{\bf Case 7.} {\it The case when  $(X,Y,Z,T)\in \{(Q, P, Q, Q), (Q, Q, Q, Q)\}$.}

\medskip 
In this case we have $\varepsilon < 0$ only for the following quadruples
$$
\begin{aligned}
(b, m, n, k-r) \in \{&(2, 5, 3, 14),\ (4, 5, 3, 14),\ (6, 2, 1, 6),\\
&(6, 2, 0, 6),\ (12, 1, 0, 6),\ (12, 1, 1, 6)\}
\end{aligned}
$$
in case of $(X, Y, Z, T) = (Q, P, Q, Q)$, and
$$
(b, m, n, k-r) \in \{(6, 1, 1, 6),\ (6, 1, 0, 6),\ (14, 4, 0, 14),\ (14, 4, 1, 14)\}
$$
in case of $(X, Y, Z, T) = (Q, Q, Q, Q)$. For all the quadruples listed in this case, direct simplification of the corresponding expression for $\mu$ gives
$\mu\in\{3,7\}.$

Combining Cases 1--7, we can restate everything in terms of 
\begin{align}\label{3mu}
\mu = u + v \frac{\log b}{\log \alpha},
\end{align}
for some integers $u$ and $v$ satisfying
$$
1\le u\le100,
\qquad
v\in\{-2,-1,0,1,2,3\}.
$$
Thus, \eqref{eqn:baker_D1} becomes
\begin{equation}\label{1Legendreb=15}
0 < \left|(l+v)\, \tau - (k-u)\right|
< \frac{6}{\log \alpha} \cdot \alpha^{-k}.
\end{equation}
Note that for these parameter tuples $\varepsilon> 0$ never holds. Indeed, since $u\in\mathbb Z$
and $|v|\le3$, for every convergent $p/q$ to $\tau$ we have
$\|\mu q\|=\|v\tau q\|\le|v|\cdot\|\tau q\|\le3\|\tau q\|$, and therefore
\begin{equation}\label{eq:eps_neg}
\varepsilon=\|\mu q\|-M\|\tau q\|\le(3-M)\|\tau q\|<0 .
\end{equation}
This is why we now turn to Lemma~\ref{lemma:legendre}.

Assume first that $l+v\ge1$ and $k>100$. Since $l<M$ and $v\le3$, we have
$l+v<M+3$. Moreover,
$$
\dfrac{\alpha^k}{12/\log \alpha}
>1.3917\cdot 10^{37}
>M+3
>l+v\ge1,
$$
and therefore, from \eqref{1Legendreb=15}, we obtain
$$
\left|\frac{\log b}{\log \alpha} - \dfrac{k-u}{l+v}\right|
< \frac{6}{\log \alpha} \cdot\dfrac{1}{(l+v)\alpha^k}
<\dfrac{1}{2(l+v)^2}.
$$
It follows from Lemma~\ref{lemma:legendre} that the rational number
$\dfrac{k-u}{l+v}$ is a convergent to
$\kappa :=\tau= \dfrac{\log b}{\log \alpha}$. For each $b$ with $2\le b\le15$, let $p_N/q_N$ be the first convergent to $\kappa$ whose denominator satisfies $q_N>M+3$, and put
$
a(M+3):=\max\{a_i:i=0,\ldots,N\}.
$ The computation gives
$
a(M+3)\le580.
$
Therefore, by Lemma~\ref{lemma:legendre},
\begin{align*}
\dfrac{1}{(a(M+3)+2)(l+v)} 
\le
\left|(l+v)\frac{\log b}{\log \alpha}-(k-u)\right|
<
\frac{6}{\log \alpha}\cdot\alpha^{-k}.
\end{align*}
This leads to
$$
k<
\frac{
\log\left(
(6/\log\alpha)(a(M+3)+2)(l+v)
\right)
}{
\log\alpha
}.
$$
Since $a(M+3)\le580$ and $l+v<M+3$, we obtain
$$
k<93.8050,
$$
which contradicts $k>100$.

Therefore,  if  $l+v\ge1$ we conclude that  $k\le 100$.

It remains to consider the case $l+v<1$. Since $l\ge1$ and $v\ge-2$,
this implies $l\in\{1,2\}$. If $r=0$, then clearly $r\le8$.
If $r\ge1$, combining \eqref{eq:seq_bounds} and \eqref{eq:digit_bounds},
we obtain
$$
r<2+l\frac{\log b}{\log\alpha}
\le2+2\frac{\log15}{\log\alpha}
<8.1451.
$$
Hence $r\le8$. Using \eqref{k-r}, we obtain
$
k\le219.
$

Combining these bounds with those in Table~\ref{tab:kk}, we obtain $k\le220$ for all sixteen equations. From Lemma \ref{lem:general_bounds} we also have $n\leq m<k\leq 220$ and $r<k\leq 220.$

\subsection{Final search}
Since all sequence indices are now explicitly bounded under the condition $n \le m$, we confirm that the solutions are exactly the corresponding subset of those found in the search from Subsection \ref{computational_search}, and that bases $b \in \{5, 7, 10, 11\}$ indeed admit no solutions for $n \le m$. This completes the proof of Theorem~\ref{thm2:main}. 

\section*{Statements and Declarations}

\noindent\textbf{Funding.}
The first author was supported by IMSP, Institut de Math\'ematiques et de Sciences Physiques, Universit\'e d'Abomey-Calavi. The second and third authors were supported by the University of Split, grant no.\@ IP-UNIST-44, funded by the European Union – NextGenerationEU.

\medskip
\noindent\textbf{Competing interests.}
The authors have no interests to disclose.

\medskip
\noindent\textbf{Code availability.}
The Mathematica code used for the computations in this study is available from the corresponding author on request.

\medskip
\noindent\textbf{Data availability.}
Data sharing is not applicable to this article as no datasets were generated or analysed during the current study.

\medskip
\noindent\textbf{Declaration on the use of AI.} 
The authors used ChatGPT (OpenAI) and Claude (Anthropic) for English-language proofreading. They reviewed all suggested changes and take full responsibility for the final manuscript.

\end{document}